\pgfplotsset{compat=1.17}
\definecolor{darkgreen}{RGB}{0,150,0} 
\definecolor{darkbrown}{RGB}{170,100,0} 
\definecolor{darkdarkbrown}{RGB}{110,70,0} 
 \renewcommand*{\backrefalt}[4]{%
    \ifcase #1%
     \or (page:~#2)%
     \else (pages:~#2)%
    \fi%
    }
\def\@fnsymbol#1{\ensuremath{\ifcase#1\or \dagger\or \ddagger\or
   \mathsection\or \mathparagraph\or \|\or **\or \dagger\dagger
   \or \ddagger\ddagger \else\@ctrerr\fi}}
\newtheorem{theorem}{Theorem}[section]
\newtheorem{lemma}{Lemma}[section]
\newtheorem{definition}{Definition}[section]
\newtheorem{proposition}{Proposition}[section]
\newtheorem{example}{Example}[section]
\newtheorem{corollary}{Corollary}[section]
\newtheorem{remark}{Remark}[section]
\newtheorem{oproblem}{Open problem}[section]
\renewcommand\paragraph{%
  \@startsection{paragraph}
    {4}
    {\z@}
    {3.25ex \@plus1ex \@minus.2ex}
    {-1em}
    {\normalfont\normalsize\bfseries\maybe@addperiod}%
}
\newcommand{\maybe@addperiod}[1]{%
  #1\@addpunct{.}%
}
\providecommand{\keywords}[1]
{
  \small	
  \textbf{\textit{Keywords---}} #1
}
\DeclarePairedDelimiter{\ceil}{\lceil}{\rceil}
\DeclarePairedDelimiter{\floor}{\lfloor}{\rfloor}
\DeclareMathOperator*{\argmin}{arg\,min}
\DeclareMathOperator*{\diag}{diag}
\newcommand{\R}{\mathbb{R}}
\newcommand{\N}{\mathbb{N}}
\newcommand{\nrm}[1]{\left\Vert #1 \right\Vert}
\newcommand{\ps}{_{\mathsf{ps}}}
\newcommand{\dps}{_{\mathsf{dps}}}
\newcommand{\dpsK}{_{\mathsf{dps},[K]}}
\newcommand{\dpshatK}{_{\mathsf{dps},[\widehat{K}]}}
\newcommand{\psK}{_{\mathsf{ps},[K]}}
\newcommand{\pshatK}{_{\mathsf{ps},[\widehat{K}]}}
\newcommand{\psKeps}{_{\mathsf{ps},[K_{\eps, \gamma \ps}]}}
\newcommand{\eqdef}{\doteq}
\newcommand{\trn}{^\intercal}
\newcommand{\eps}{\varepsilon}
\newcommand{\abs}[1]{\left| #1 \right|}
\newcommand{\set}[1]{\left\{ #1 \right\}}
\newcommand{\PR}[2][]{\mathbb{P}_{#1}\left( #2 \right)}
\newcommand{\E}[2][]{\mathbb{E}_{#1}\left[ #2 \right]}
\newcommand{\calX}{\mathcal{X}}
\newcommand{\calE}{\mathcal{E}}
\newcommand{\calW}{\mathcal{W}}
\newcommand{\calP}{\mathcal{P}}
\newcommand{\innerpi}[2]{\langle #1, #2 \rangle_{\pi}}
\newcommand{\pimin}{\pi_\star}
\newcommand{\bigO}{\mathcal{O}}
\newcommand{\rev}{_{\mathsf{rev}}}
\newcommand{\Dpi}{D_\pi}
\newcommand{\tmix}{t_{\mathsf{mix}}}
\newcommand{\tv}[1]{\nrm{#1}_{\mathsf{TV}}}
\newcommand{\pred}[1]{\delta \left[#1\right]}
\newcommand{\Nmax}{N_{\max}}
\newcommand{\Nmin}{N_{\min}}
\newcommand{\EE}[1]{\mathbf{\mathcal{E}}_{#1}}
\newcommand{\estpssg}{\widehat{\gamma}_{\mathsf{ps}}}
\title{Improved Estimation of Relaxation Time in \\  Non-reversible Markov Chains}
\author[1]{Geoffrey Wolfer \thanks{email: geoffrey.wolfer@riken.jp \\
The author is supported by the Special Postdoctoral Researcher Program (SPDR) of RIKEN.}}
\author[2]{Aryeh Kontorovich \thanks{email: karyeh@cs.bgu.ac.il}}
\affil[1]{RIKEN Center for AI Project}
\affil[2]{Department of Computer Science \protect\\ Ben-Gurion University of the Negev}
\date{\today}
\begin{document}

\maketitle

\begin{abstract}
We show that the minimax sample complexity for estimating the pseudo-spectral gap $\gamma_{\mathsf{ps}}$ of an ergodic Markov chain in constant multiplicative error is of the order of $$\tilde{\Theta}\left( \frac{1}{\gamma_{\mathsf{ps}} \pi_\star} \right),$$ where $\pi_\star$ is the minimum stationary probability, recovering the known bound in the reversible setting for estimating the absolute spectral gap \citep{hsu2019}, and resolving an open problem of \citet{pmlr-v99-wolfer19a}. Furthermore, we strengthen the known empirical procedure by making it fully-adaptive to the data, thinning the confidence intervals and reducing the computational complexity. Along the way, we derive new properties of the pseudo-spectral gap and introduce the notion of a reversible dilation of a stochastic matrix.
\end{abstract}

\keywords{ergodic Markov chain;  mixing time; pseudo-spectral gap, empirical confidence interval}

\tableofcontents

\section{Introduction}
\label{section:introduction}
Let $\{X_t\}_{t \in \N}$ be a time-homogeneous ergodic Markov chain over a state space $\calX$, with transition matrix $P$ and stationary distribution $\pi$. When $P$ is reversible, the convergence of the chain to its stationary distribution is roughly governed by the relaxation time, defined as the inverse of the absolute spectral gap $\gamma_\star$ of $P$, see  \eqref{definition:absolute-spectral-gap} and \eqref{eq:absolute-spectral-gap-controls-tmix}. When $P$ is non-reversible, the definition of the relaxation time can be extended to the inverse of the pseudo-spectral gap $\gamma \ps$, introduced by \citet{paulin2015concentration} and defined in \eqref{eq:pseudo-spectral-gap}.
Furthermore, $\gamma_\star$ and $\gamma\ps$ are spectral parameters of intrinsic interest, as they were shown to control rates of Bernstein type concentration inequalities \citep[Theorems~3.9~\&~3.11]{paulin2015concentration}, and yield bounds on the asymptotic variance in terms of the stationary variance \citep[Theorem~3.5~\&~3.7]{paulin2015concentration}. 
Under reversibility, the sample complexity of estimating $\gamma_\star$ to multiplicative error $\eps$ from a single trajectory of observations is known to be $\tilde{\Theta}(1/(\pimin \gamma_\star \eps^2))$ \citep{hsu2019,pmlr-v99-wolfer19a}, where $\pimin$ is the minimum stationary probability.
The non-reversible setting, first investigated by \citet{pmlr-v99-wolfer19a}, remains less understood, with
a large gap between known upper and lower bounds (see Table~\ref{table:comparison-state-of-the-art}).
In this paper, we set out the task of closing this chasm, and will mostly succeed doing so by recovering an upper bound on the rate of $\bigO(1/(\pimin \gamma \ps))$.

\subsection{Motivation and applications} 
Although a wide class of chains is known to be reversible (such as random walks 
on graphs or birth and death processes),
this condition is sufficiently restrictive as to exclude some natural and important applications.
One such
instance involves MCMC diagnostics for non-reversible chains, 
which have recently gained interest through {acceleration} methods. 
Indeed, while chains generated by the classical Metropolis-Hastings are reversible,
which is instrumental in
analyzing the stationary distribution, non-reversible chains may enjoy better {mixing properties} as well as improved asymptotic variance by reduction of backtracking behavior.
For theoretical and experimental results in this direction, see \citet{hildebrand1997rates, chen1999lifting, diaconis2000analysis, neal2004improving, sun2010improving, suwa2010markov, turitsyn2011irreversible,  chen2013accelerating, vucelja2016lifting, bierkens2016non, power2019accelerated, herschlag2020non, syed2022non}.
Another application is in reinforcement learning, where bounds on the mixing parameters of the underlying Markov decision process are routinely assumed  \citep{ortner2020regret, zweig2020provably, li2023accelerated}. Last but not least, many results from statistical learning or empirical process theory have been extended to Markov dependent data \citet{yu1994rates, mohri2007stability, steinwart2009learning, shalizi2013predictive, garnier2021machine, truong2022generalization, wolfer2021, kotsalis2022tractable, truong2022kernel, garnier2022hold}, and empirical estimates of the mixing parameters of the chain yield corresponding data-dependent generalization bounds.

\subsection{Main contributions}
We now give an informal overview of our contributions.
From a single trajectory of observations of length $m$, sampled according to an unknown ergodic transition matrix $P$ with minimum stationary probability $\pi_\star$ and pseudo-spectral gap $\gamma \ps$, and started from an arbitrary and unknown state, we obtain the following results.
\begin{enumerate}
    \item[$\star$] Theorem~\ref{theorem:pseudo-spectral-gap-estimation-absolute}. We upper bound the sample complexity of estimating $\gamma \ps$, to arbitrary additive error $\eps$, by 
    $$\tilde{\bigO} \left( \frac{1}{\eps^2 \pi _\star \gamma \ps}\right).$$
    \item[$\star$] Theorem~\ref{theorem:pseudo-spectral-gap-estimation-relative}. For estimating $\gamma \ps$ to constant multiplicative error, we obtain an upper bound of 
    $$\tilde{\bigO} \left( \frac{1}{\pi _\star \gamma \ps}\right).$$ 
    This bound is significantly stronger to the one in \citet{pmlr-v99-wolfer19a},
    which involved somewhat unnatural quantities (for instance a measure of how far a stochastic matrix is from being doubly stochastic). Furthermore, the new bound reduces to the known one in the reversible setting \citep{hsu2019}, for which they exist corresponding lower bounds, making it generally unimprovable and closing the open question of \citet[Remark~5]{pmlr-v99-wolfer19a}.
    See Section~\ref{section:related-work} and Table~\ref{table:comparison-state-of-the-art} for further comparison with the state-of-the-art.
    \item[$\star$] Theorem~\ref{theorem:pseudo-spectral-gap-estimation-relative-arbitrary}. For estimation of $\gamma \ps$ to arbitrarily small multiplicative error $\eps$, we recover an upper bound of
    $$\tilde{\bigO} \left( \frac{1}{\eps^2 \pi _\star \gamma^3 \ps}\right).$$ 
    \item[$\star$] Definition~\ref{definition:reversible-dilation}. We introduce the reversible dilation of a Markov chain, whose spectral properties are closely related to that of \citeauthor{fill1991eigenvalue}'s multiplicative reversiblization, and that is less expensive to compute when the stationary distribution is known.
    \item[$\star$] Theorem~\ref{theorem:confidence-intervals}. We improve the width of the confidence intervals of \citet{pmlr-v99-wolfer19a} in non-trivial regimes.
Furthermore, we make the procedure entirely adaptive to the data, in contrast to the previous intervals, which hard-coded an approximation error.
\end{enumerate}

\subsection{Outline}
Section~\ref{section:introduction} compares our results with the related work, lists our contributions, introduces notation, and presents some background on ergodic Markov chains and associated Hilbert space concepts.
In Section~\ref{section:minimax-estimation}, we exhibit a collection of features possessed by \citeauthor{paulin2015concentration}'s pseudo-spectral gap $\gamma \ps$, defined in \eqref{eq:pseudo-spectral-gap}.
We then proceed to state and prove our main results (Theorems~\ref{theorem:pseudo-spectral-gap-estimation-absolute}, \ref{theorem:pseudo-spectral-gap-estimation-relative}, \ref{theorem:pseudo-spectral-gap-estimation-relative-arbitrary}).
In Section~\ref{section:empirical-estimation}, we define the reversible dilation of a chain, explore its properties, and
set out to
improve the empirical estimation procedure and confidence intervals of \citet{pmlr-v99-wolfer19a}.
Section~\ref{section:algorithm} discusses the implementation details and computational complexity of the empirical estimation procedure of Section~\ref{section:empirical-estimation}.
Some proofs are deferred to Section~\ref{section:proofs} for readability.

\subsection{Related work}
\label{section:related-work}
 \subsubsection{Reversible setting}
 
 \citet{NIPS2015_7ce3284b} initiated the research program of estimating the absolute spectral gap of a reversible Markov chain from a single trajectory of observations. Using Hilbert space techniques, they obtained upper and lower bounds (Table~\ref{table:comparison-state-of-the-art}) on the estimation problem to multiplicative error, and designed fully empirical confidence intervals that decay at roughly $1/\sqrt{m}$. \citet{levin2016estimating} and \citet{hsu2019} later strengthened the minimax sample complexity upper bound in this setting, nearly matching the lower bound.
 \citet{pmlr-v99-wolfer19a} subsequently designed a finer lower bound that involves the precision parameter, further tightened the rate and simplified the empirical procedure of \citet{hsu2019}, by removing the need to compute a group (Drazin) inverse in order to spell out the confidence intervals.
 Finally, \citet{combes2019computationally} took a different approach and
designed a space efficient estimator by invoking power methods and upper confidence interval techniques.
 
\subsubsection{General, non-reversible setting} 
 
The first results in the non-reversible case were obtained by \citet{pmlr-v99-wolfer19a}, who gave an upper bound on the sample complexity of estimating the pseudo-spectral gap of a Markov chain and constructed fully empirical confidence intervals for it.
In Table~\ref{table:comparison-state-of-the-art}, we compare our result to the above-mentioned references within the PAC minimax framework at multiplicative error.

{\def\arraystretch{3}
\begin{table}%
    \begin{center}
    \begin{tabular}{c|c|c}
         Setting / Estimated parameter & Reversible / $\gamma_\star$ & Non-reversible / $\gamma \ps$\\\hline
        {\small \citet{NIPS2015_7ce3284b}} & $\Omega\left( \frac{\abs{\calX}}{\gamma_\star}\right)$, $\Omega\left(\frac{1}{\pimin}\right)$, $\widetilde{\bigO}\left( \frac{1}{ \pimin \gamma_\star^3 \eps^2}\right)$ & \\\hline
        \makecell{{\small \citet{levin2016estimating}} \\ {\small \citet{hsu2019}}} & $\widetilde{\bigO}\left( \frac{1}{\pi_\star \gamma_ \star\eps^2}\right)$ & \\\hline
        {\small \citet{pmlr-v99-wolfer19a}} &  $\Omega\left( \frac{\abs{\calX}}{\gamma_\star \eps^2}\right)$ & \makecell{ $\Omega\left( \frac{\abs{\calX}}{\gamma \ps \eps^2}\right)$, \\ $\widetilde{\bigO}\left(\frac{1}{\pimin \gamma^3 \ps \eps^2} + \frac{\abs{\calX}}{\pimin^2  \gamma^2 \ps \eps^2} \right)$} \\\hline
        Present work &  & \makecell{ 
         $\widetilde{\bigO} \left( \frac{1}{\pimin \gamma \ps} \right)$ (Th.~\ref{theorem:pseudo-spectral-gap-estimation-relative})
        }\\
    \end{tabular}
    \end{center}
    \caption{Comparison with existing results in the literature. $\Omega$ and $\bigO$ respectively denote lower and upper bounds on the sample complexity to constant multiplicative error. When $\eps$ appears in the bound, the multiplicative error is down to arbitrarily small $\eps$ instead of constant. The tilde notation suppresses logarithmic factors in $\abs{\calX}, \pimin^{-1}, \gamma \ps^{-1}, \eps^{-1} \delta^{-1}$.}
    \label{table:comparison-state-of-the-art}
\end{table}}

Both in the reversible and non-reversible case, we can readily obtain estimates of $\tmix$ from estimates of $\gamma_\star$ or $\gamma \ps$ and $\pimin$ 
--see \eqref{eq:absolute-spectral-gap-controls-tmix}, \eqref{eq:pseudo-spectral-gap-controls-tmix}-- and the latter quantity is also possible to estimate in $\tilde{\bigO}(1/(\pimin \gamma_\star \eps^2))$ \citep{hsu2019}, $\tilde{\bigO}(1/(\pimin \gamma \ps \eps^2))$ \citep{pmlr-v99-wolfer19a}.

\subsubsection{Direct mixing time estimation}
More recently, the line of work of \citet{pmlr-v117-wolfer20a, wolfer2022empirical}
departed from spectral methods and relied on measure contraction to tackle the harder problem of estimating the mixing time to multiplicative error, instead of the (pseudo)-relaxation time. They defined a generalized version of Dobrushin's contraction coefficient that approximates the mixing time up to universal factors, without introducing logarithmic gaps in $\pimin^{-1}$ [\eqref{eq:absolute-spectral-gap-controls-tmix}, \eqref{eq:pseudo-spectral-gap-controls-tmix}]. They obtained an upper bound of $\tilde{\bigO}(\tmix/\pimin + \Xi(P))$, where $\Xi(P)$ is a quantity that is highly instance-dependent and bounded above by $\Xi(P) \leq \tmix \abs{\calX}/\pimin$. Our results are therefore complementary to \citet{wolfer2022empirical} and compare favorably in terms of sample complexity by shaving a factor $\abs{\calX}$ in the worst case.

\subsection{Preliminaries}

\subsubsection{Notation}
Let $\calX$ be a finite space. We write $\calP(\calX)$ for the set of all probability distributions over $\calX$. 
Vectors will be written as row vectors. For $x \in \calX$, $e_x$ is the vector such that for any $x' \in \calX$, $e_x(x') = \pred{x = x'}$.
For two matrices (and in particular vectors) $A$ and $B$, $A/B$ denotes the entry-wise division, $\sqrt{A}$ denotes the entry-wise square root operation,  $A \circ B$ is their Hadamard product, and $A > 0$ means that $A$ is entry-wise positive. $\rho(A)$ and $\nrm{A}$ are respectively the spectral radius and the spectral norm of $A$. For a statistic $S \colon \calX^m \to \R$ computed from a sample $X_1, \dots, X_m$, we will write either $S$ for compactness or $S(X_1, \dots, X_m)$ when we want to emphasize that $S$ is a function of the data.

\subsubsection{Ergodic Markov chains and mixing time}
By a Markov chain $X_1, \dots X_m \sim (\mu, P)$ over state space $\calX$ with initial distribution $\mu$ and (row-stochastic) transition matrix $P$, we mean that for $(x_1, \dots, x_m) \in \calX^m$,
\begin{equation*}
    \PR{X_1 = x_1, \dots, X_m = x_m} = \mu(x_1) \prod_{t=1}^{m-1}P(x_t, x_{t+1}).
\end{equation*}
For any $k \in \mathbb{N}$ 
we further define the $k$-skipped associated Markov chain,
\begin{equation}
\label{eq:skipped-chain}
    X_{1}, X_{1 + k}, X_{1 + 2k}, \dots, X_{1 + \floor{(m-1)/k}k} \sim (\mu, P^k).
\end{equation}
Let $\pi \in \calP(\calX)$ such that $\pi P = \pi$ \footnote{When the state space is finite, such $\pi$ always exists via a Brouwer fixed-point argument.}. We say that $\pi$ is a stationary distribution for $P$.
A Markov chain is called ergodic when $P$ is primitive, i.e. there exists an integer $k$ such that $P^k > 0$.
In this case $\pi$ is unique, and the minimum stationary probability $\pi_\star = \min_{x \in \calX} \pi(x)$ is positive.
Furthermore, the chain will converge to stationarity, and we define the mixing time of $P$ as
\begin{equation*}
    \tmix \eqdef \min\set{t \geq 1 \colon \max_{\mu \in \calP(\calX)}\tv{\mu P^t - \pi} < 1/4},
\end{equation*}
where $\tv{\cdot}$ is the total variation distance.

\subsubsection{The Hilbert space \texorpdfstring{$\ell_2(\pi)$}{Ã¢â€žâ€œ2(Ãâ‚¬)} and spectral methods}
Let $P$ be the transition matrix of an ergodic Markov chain over the state space $\calX$, with stationary distribution $\pi$.
Recall that on one hand, we can view $P$ as a linear operator acting on measures from the right. In particular, for $\mu_t \in \calP(\calX)$,
$$\mu_{t} P = \mu_{t+1} \in \calP(\calX),$$
corresponds to a new distribution $\mu_{t+1}$ after one step on the chain.
What is more, we can simultaneously view $P$
as a linear operator acting on real functions over $\calX$ from the left, where for $f \in \R^\calX$,
$Pf$ can be interpreted as the expected values of $f$ after a single step on the chain,
$$Pf = \left(\E[(X_t,X_{t+1}) \sim (e_x, P)]{f(X_{t+1})} \right)_{x \in \calX}\trn.$$
Since $P$ is irreducible, $\pi > 0$, and
for two functions $f,g \in \R^\calX$, we can construct the inner-product
\begin{equation*}
    \innerpi{f}{g} \eqdef \sum_{x \in \calX} f(x)g(x) \pi(x).
\end{equation*}
We write $\nrm{f}_{\pi} \eqdef \sqrt{ \sum_{x \in \calX} f(x)^2 \pi(x)}$ for its associated norm, and denote $\ell_2(\pi) = (\R^\calX, \innerpi{\cdot}{\cdot})$ the resulting Hilbert space. The induced norm of an operator $A \colon \ell_2(\pi) \to \ell_2(\pi)$ is then defined as
\begin{equation*}
    \nrm{A}_{\pi} \eqdef \sup_{f \in \ell_2(\pi) \colon \nrm{f}_{\pi} = 1} \nrm{A f}_{\pi}.
\end{equation*}
By the Riesz representation theorem, there exists a unique adjoint operator $P^\star$ such that for any $f,g \in \ell_2(\pi)$,
\begin{equation*}
    \innerpi{Pf}{g} =  \innerpi{f}{P^\star g},
\end{equation*}
which in the context of Markov chains is called the time reversal of $P$, and can be expressed as $P^\star(x,x') = \pi(x')P(x', x)/\pi(x)$.
\paragraph{Reversible setting}
When $P$ is self-adjoint, the chain is said to be reversible and satisfies the so-called detailed balance equation,
\begin{equation*}
    \pi(x)P(x,x') = \pi(x')P(x',x).
\end{equation*}
In this case, the spectrum of $P$, denoted $\sigma(P) = \set{\lambda_x}_{x \in \calX}$ is real, and the inverse of the absolute spectral gap \begin{equation}
\label{definition:absolute-spectral-gap}
\gamma_\star \eqdef 1 - \max \set{ \lambda_x \colon x \in \calX, \abs{\lambda_x} \neq 1 },
\end{equation}
termed relaxation time,
is known to effectively control (see e.g. \citet{levin2009markov}) the convergence of $P$ to stationarity,
\begin{equation}
\label{eq:absolute-spectral-gap-controls-tmix}
    \left(\frac{1}{\gamma_\star} - 1\right)\log 2 \leq \tmix \leq \frac{1}{\gamma_\star} \log \frac{4}{\pimin}.
\end{equation}
Self-adjointness on $\ell_2(\pi)$ also leads to a notion of a positive semi-definite matrix.
We denote by $\succeq_\pi$ the Loewner order with respect to $\ell_2(\pi)$. By that we mean that $A \succeq_\pi 0$ if and only if for any $f \in \ell_2(\pi)$, it holds that
$$\innerpi{A f}{f} \geq 0,$$
and we write $B \succeq_\pi A$ whenever $B - A \succeq_\pi 0$.
It will also be convenient to introduce the matrices
\begin{equation}
    \label{eq:laplacian-and-memoryless-stationary}
    \Dpi \eqdef \diag(\pi), \qquad
    L \eqdef \Dpi^{1/2} P 
\Dpi^{-1/2}, \qquad \Pi \eqdef 1 \trn \pi.
\end{equation}
Finally, we state the following well-known fact whose proof is provided in Section~\ref{section:proofs} for convenience of the reader.

\begin{lemma}
\label{lemma:technical-correspondence}
Let $P$ be ergodic with stationary distribution $\pi$. When $P$ is reversible,
\begin{equation*}
     1 - \gamma_\star(P) = \rho(P - \Pi) = \nrm{P - \Pi}_{\pi}.
\end{equation*}
\end{lemma}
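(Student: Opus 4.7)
The plan is to split the two claimed equalities and handle both by exploiting self-adjointness of $P-\Pi$ on $\ell_2(\pi)$. A preliminary observation: viewed as a left-acting operator on functions, the rank-one matrix $\Pi = 1\trn \pi$ sends any $f \in \ell_2(\pi)$ to the constant function $\innerpi{f}{1}\cdot 1$, so $\Pi$ is precisely the orthogonal projection of $\ell_2(\pi)$ onto the line of constants, and in particular is self-adjoint. Since reversibility makes $P$ self-adjoint on $\ell_2(\pi)$, the operator $P - \Pi$ is also self-adjoint.

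For the equality $\rho(P-\Pi) = \nrm{P-\Pi}_\pi$ I would invoke the standard spectral theorem on a finite-dimensional real Hilbert space: for any self-adjoint operator $A$ one has $\nrm{A}_\pi = \max_{\lambda \in \sigma(A)} |\lambda| = \rho(A)$. Applied to $A = P - \Pi$ this immediately yields the second equality.

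For the first equality, $1-\gamma_\star = \rho(P-\Pi)$, I would diagonalize $P$ in an orthonormal eigenbasis $\{v_1, \dots, v_n\}$ of $\ell_2(\pi)$ furnished by the spectral theorem, with real eigenvalues $\lambda_1 = 1 > \lambda_2 \geq \dots \geq \lambda_n$ (ergodicity ensures the Perron eigenvalue is simple and all remaining eigenvalues lie strictly inside the unit interval). I would then pick the constant function $v_1 \equiv 1$ as the Perron eigenvector; orthonormality forces $\innerpi{v_i}{1} = 0$ for $i \geq 2$, so $\Pi v_1 = v_1$ while $\Pi v_i = 0$ for $i \geq 2$. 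Consequently $(P-\Pi) v_i = (\lambda_i - \delta_{i,1}) v_i$ for every $i$, which shows $\sigma(P-\Pi) = \set{0, \lambda_2, \dots, \lambda_n}$ and hence $\rho(P-\Pi) = \max_{i \geq 2} |\lambda_i| = 1 - \gamma_\star$, by the definition recalled in \eqref{definition:absolute-spectral-gap}.

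No serious obstacle arises; the argument is a routine consequence of the spectral theorem once one recognizes that $\Pi$ is the $\ell_2(\pi)$-orthogonal projection onto the Perron eigenline. The only point that truly deserves care is verifying that the matrix-level object $\Pi = 1\trn \pi$ coincides with that orthogonal projection (rather than some other normalization), which is immediate from the definition of $\innerpi{\cdot}{\cdot}$ involving $\pi$ as a weighting.
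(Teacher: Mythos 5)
Your proof is correct and follows essentially the same route as the paper's: both reduce the claim to the spectral theorem for a self-adjoint operator, after identifying $\Pi$ with the orthogonal projection onto the Perron eigenline spanned by the constant function. The only difference is presentational---you argue intrinsically in $\ell_2(\pi)$, whereas the paper conjugates by $\Dpi^{1/2}$ to the symmetric matrix $L$ and works in the Euclidean inner product, which is the same argument transported through the isometry $f \mapsto f\sqrt{\pi}$.
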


\paragraph{General, non-reversible setting}
When $P$ is not reversible, $\sigma(P)$ may not be real and only the lower bound in \eqref{eq:absolute-spectral-gap-controls-tmix} remains true\footnote{In this case $\abs{\cdot}$ is interpreted as the complex norm in the definition \eqref{definition:absolute-spectral-gap} of $\gamma_\star$. }. 
\citet{fill1991eigenvalue} introduced the multiplicative reversiblization $P^\star P$ of $P$ and showed how to recover an upper bound on $\tmix$ in terms of $\gamma_\dagger(P) \eqdef \gamma_\star(P^\star P)$, albeit largely pessimistic.
\citeauthor{paulin2015concentration} later generalized this idea by introducing the pseudo-spectral gap,
\begin{equation}
\label{eq:pseudo-spectral-gap}
    \gamma \ps \eqdef \max_{k \in \N} \set{ \frac{1}{k}\gamma_\dagger\left(P^k\right)}.
\end{equation}
and showed that it controls \citep[Proposition~3.4]{paulin2015concentration} the mixing time up to a logarithmic factor 
\begin{equation}
\label{eq:pseudo-spectral-gap-controls-tmix}
    \frac{1}{2 \gamma \ps} \leq \tmix \leq \frac{1}{\gamma \ps} \log \frac{4e}{\pimin},
\end{equation}
making it a suitable spectral proxy in the non-reversible setting. 
Following Lemma~\ref{lemma:technical-correspondence}, the absolute spectral gap of the multiplicative reversiblization of a skipped chain can be conveniently re-expressed as follows.
\begin{corollary}[to Lemma~\ref{lemma:technical-correspondence}]
\label{corollary:technical-correspondence-skipped}
Let $P$ be ergodic with stationary distribution $\pi$.
For any $k \in \N$,
\begin{equation*}
    1 - \gamma_\dagger(P^k) = \nrm{(P^\star - \Pi)^k(P - \Pi)^k}_{\pi}.
\end{equation*}
\end{corollary}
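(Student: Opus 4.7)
The plan is to reduce the claim to Lemma~\ref{lemma:technical-correspondence} by observing that the multiplicative reversiblization of the $k$-skipped chain, namely $(P^\star)^k P^k$, is a self-adjoint stochastic operator on $\ell_2(\pi)$ (hence reversible with respect to $\pi$), and that subtracting $\Pi$ from a $k$-th power is the same as taking the $k$-th power of the differenced operator.

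First, I would note that $(P^k)^\star = (P^\star)^k$ on $\ell_2(\pi)$, so $\gamma_\dagger(P^k) = \gamma_\star((P^\star)^k P^k)$ by the definition of $\gamma_\dagger$. Next I would verify that $(P^\star)^k P^k$ is row-stochastic (product of row-stochastic matrices), self-adjoint on $\ell_2(\pi)$, and admits $\pi$ as stationary distribution (since $\pi P = \pi$ and $\pi P^\star = \pi$). Hence Lemma~\ref{lemma:technical-correspondence} applies and yields
\begin{equation*}
    1 - \gamma_\dagger(P^k) \;=\; 1 - \gamma_\star\!\left((P^\star)^k P^k\right) \;=\; \nrm{(P^\star)^k P^k - \Pi}_{\pi}.
\end{equation*}

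It then remains to show the algebraic identity $(P^\star - \Pi)^k(P - \Pi)^k = (P^\star)^k P^k - \Pi$. The key ingredients are the absorption relations $P\Pi = \Pi P = \Pi$, $P^\star \Pi = \Pi P^\star = \Pi$, and $\Pi^2 = \Pi$. These all follow immediately from $\pi P = \pi$, $\pi P^\star = \pi$, and $\Pi = \mathbf{1}\trn \pi$. A short induction on $k$ then gives $(P - \Pi)^k = P^k - \Pi$ and likewise $(P^\star - \Pi)^k = (P^\star)^k - \Pi$ for every $k \geq 1$. Multiplying these two expressions and again using $(P^\star)^k \Pi = \Pi$ and $\Pi P^k = \Pi$ collapses the cross terms to $-\Pi$, yielding $(P^\star - \Pi)^k(P - \Pi)^k = (P^\star)^k P^k - \Pi$, and the corollary follows.

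There is no serious obstacle here; the only point to be careful about is confirming that the adjoint commutes with powers, i.e.\ $(P^k)^\star = (P^\star)^k$, and that $\pi$ is indeed stationary for $(P^\star)^k P^k$ so that Lemma~\ref{lemma:technical-correspondence} is applicable with the same $\Pi$ on both sides.
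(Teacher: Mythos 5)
Your proposal is correct and follows essentially the same route as the paper: invoke Lemma~\ref{lemma:technical-correspondence} applied to the multiplicative reversiblization $(P^\star)^k P^k$, then establish the algebraic identity $(P^\star - \Pi)^k(P - \Pi)^k = (P^\star)^k P^k - \Pi$ via the absorption relations $P\Pi = \Pi P = \Pi = P^\star \Pi = \Pi P^\star$ and $\Pi^k = \Pi$. Your version is slightly more careful in explicitly verifying that the lemma's hypotheses (stochasticity, self-adjointness, stationarity of $\pi$) hold for $(P^\star)^k P^k$, which the paper leaves implicit.
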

\begin{proof}
From Lemma~\ref{lemma:technical-correspondence},
$$\gamma_\dagger(P^k) = 1 - \nrm{(P^\star)^{k}P^{ k} - \Pi}_{\pi}.$$
Now, note that
$$\Pi P = P \Pi = \Pi = \Pi P^\star = P^\star \Pi,$$
and that $\Pi^k = \Pi$,
from which it follows that
$$(P^\star - \Pi)^k(P - \Pi)^k = (P^\star)^{k}P^{k} - \Pi.$$
\end{proof}
For easy reference, we will write $k \ps$ for the smallest integer that verifies $\gamma \ps = \gamma_\dagger(P^{k \ps})/k \ps$, and we will call relaxation time the inverse of the pseudo-spectral gap of $P$.

\section{Minimax estimation of \texorpdfstring{$\gamma \ps$}{the pseudo-spectral gap}}
\label{section:minimax-estimation}
We begin this section by proving several properties of the pseudo-spectral gap that we will rely upon throughout the manuscript, and that may also be of independent interest.

\subsection{Properties of \texorpdfstring{$\gamma \ps$}{the pseudo-spectral gap}}
\label{section:properties-pseudo-spectral-gap}

Let $P$ be some ergodic, possibly non-reversible transition matrix with pseudo-spectral gap $\gamma \ps$.
The first lemma reveals a sub-multiplicativity property of the $\nrm{\cdot}_\pi$ norm.

\begin{lemma}
\label{lemma:sub-multiplicativity-property}
Let $r, s \in \N$, it holds that
\begin{equation*}
    \nrm{(P^\star - \Pi)^{r+s}(P - \Pi)^{r+s}}_{\pi} \leq \nrm{(P^\star - \Pi)^{r}(P - \Pi)^{r}}_{\pi} \nrm{(P^\star - \Pi)^{s}(P - \Pi)^{s}}_{\pi}. 
\end{equation*}
\end{lemma}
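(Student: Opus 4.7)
The plan is to reduce the sub-multiplicativity claim to two elementary Hilbert-space facts: (i) the identity $\|T^\star T\|_\pi = \|T\|_\pi^2$ valid for any bounded operator on $\ell_2(\pi)$, and (ii) the sub-multiplicativity of the operator norm applied to powers of a single operator. The passage from the displayed quantity to this setup hinges on correctly identifying the adjoint of $P - \Pi$.

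First, I would set $A \eqdef P^\star - \Pi$ and $B \eqdef P - \Pi$, and check that $\Pi$ is self-adjoint on $\ell_2(\pi)$: indeed, for any $f,g \in \ell_2(\pi)$,
\begin{equation*}
\innerpi{\Pi f}{g} = \left(\sum_y \pi(y) f(y)\right)\left(\sum_x \pi(x) g(x)\right) = \innerpi{f}{\Pi g}.
\end{equation*}
Combined with $(P^\star)^\star = P$ and the linearity of the adjoint, this yields $B^\star = A$ and $A^\star = B$, so $(B^r)^\star = A^r$ for every $r \in \N$.

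Next, I would rewrite $A^r B^r = (B^r)^\star B^r$, which is positive semi-definite and self-adjoint in $\ell_2(\pi)$, and invoke the standard $C^\star$-identity $\|T^\star T\|_\pi = \|T\|_\pi^2$ (with $T = B^r$) to obtain
\begin{equation*}
\nrm{A^r B^r}_\pi = \nrm{B^r}_\pi^2,
\end{equation*}
and analogously $\nrm{A^s B^s}_\pi = \nrm{B^s}_\pi^2$ and $\nrm{A^{r+s} B^{r+s}}_\pi = \nrm{B^{r+s}}_\pi^2$.

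The conclusion then follows in a single line by factoring $B^{r+s} = B^r B^s$ and applying sub-multiplicativity of the induced operator norm:
\begin{equation*}
\nrm{A^{r+s} B^{r+s}}_\pi = \nrm{B^{r+s}}_\pi^2 \leq \nrm{B^r}_\pi^2 \nrm{B^s}_\pi^2 = \nrm{A^r B^r}_\pi \nrm{A^s B^s}_\pi.
\end{equation*}
There is no real obstacle here; the only place one could slip is the adjoint bookkeeping, since $P$ is not assumed reversible and $A^r B^r$ itself is not of the form $T^\star T$ unless one first recognizes that $A = B^\star$ on $\ell_2(\pi)$. Once that is in hand, the argument is purely mechanical and never uses anything about $P$ beyond ergodicity and the presence of the stationary projector $\Pi$.
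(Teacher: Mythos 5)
Your proof is correct, and it takes a genuinely different route from the paper's. The paper works with the Loewner order $\succeq_\pi$ on $\ell_2(\pi)$: it first shows that $\nrm{\cdot}_\pi$ is monotone for $\succeq_\pi$ on positive self-adjoint operators and that $\nrm{A}_\pi I \succeq_\pi A$, then writes $(P^\star-\Pi)^{r+s}(P-\Pi)^{r+s} = (P^\star-\Pi)^{r}\left[(P^\star-\Pi)^{s}(P-\Pi)^{s}\right](P-\Pi)^{r}$ and sandwiches the bracketed middle factor by $\nrm{(P^\star-\Pi)^{s}(P-\Pi)^{s}}_\pi I$, conjugating by $(P-\Pi)^{r}$ and its adjoint to preserve the order. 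You instead make explicit that $P^\star-\Pi$ is the adjoint of $P-\Pi$ (via self-adjointness of $\Pi$, which you verify correctly), invoke the $C^\star$-identity $\nrm{T^\star T}_\pi = \nrm{T}_\pi^2$ to collapse each of the three quantities to the square of an operator norm of a power of $B = P-\Pi$, and finish with plain sub-multiplicativity of the induced norm applied to $B^{r+s}=B^rB^s$. Both arguments are sound; yours is shorter and yields as a byproduct the clean identity $\nrm{(P^\star-\Pi)^{k}(P-\Pi)^{k}}_\pi = \nrm{(P-\Pi)^{k}}_\pi^{2}$, i.e.\ $1-\gamma_\dagger(P^k)=\nrm{(P-\Pi)^k}_\pi^2$ in view of Corollary~\ref{corollary:technical-correspondence-skipped}, whereas the paper's route builds up Loewner-order machinery in $\ell_2(\pi)$ that is not reused elsewhere.
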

\begin{proof}
We begin by verifying that
$\nrm{\cdot}_\pi$ is monotone for $\succeq_\pi$, similar to the Loewner order with respect to the usual inner product.
Let $A, B$ be linear maps over $\ell_2(\pi)$ such that $A \preceq_\pi B$, and let $f_\star \in \ell_2(\pi)$ with  $\nrm{f_\star}_\pi = 1$ satisfying $\nrm{Af_\star}_\pi = \nrm{A}_{\pi}$. 
Then
\begin{equation*}
    0 \leq \innerpi{(B - A)f_\star}{f_\star} = \innerpi{B f_\star}{f_\star} - \innerpi{A f_\star}{f_\star} = \innerpi{B f_\star}{f_\star} - \nrm{A}_\pi \leq \nrm{B}_\pi - \nrm{A}_\pi.
\end{equation*}
It is also immediate that for a linear map $A$
\begin{equation}
\label{eq:dominating-diagonal}
    \nrm{A}_\pi I \succeq_\pi A.
\end{equation}
Indeed, for $f \in \ell_2(\pi)$ with $\nrm{f}_\pi = 1$, we have by linearity
\begin{equation*}
    \innerpi{(\nrm{A}_\pi I - A) f}{f} = \nrm{A}_\pi \innerpi{f}{f} - \innerpi{ A f}{f} \geq \nrm{A}_\pi - \max_{\nrm{f}_\pi = 1}\innerpi{ A f}{f} = 0.
\end{equation*}
Let us now proceed with the following expansion
\begin{equation*}
    (P^\star - \Pi)^{r+s}(P - \Pi)^{r+s} = (P^\star - \Pi)^{r}(P^\star - \Pi)^{s}(P - \Pi)^{s}(P - \Pi)^{r}.
\end{equation*}
Let $f \in \ell_2(\pi)$. Since $(P^\star - \Pi)^{r}$ is the adjoint of $(P - \Pi)^{r}$ in $\ell_2(\pi)$, and writing $g = (P - \Pi)^{r}f$,
\begin{equation*}
\begin{split}
    &\innerpi{(P^\star - \Pi)^{r}\left[\nrm{(P^\star - \Pi)^{s}(P - \Pi)^{s}}_\pi I - (P^\star - \Pi)^{s}(P - \Pi)^{s}\right](P - \Pi)^{r}f}{f} \\
    & =  
    \innerpi{\left[\nrm{(P^\star - \Pi)^{s}(P - \Pi)^{s}}_\pi I - (P^\star - \Pi)^{s}(P - \Pi)^{s}\right]g}{g} \geq  0,
\end{split}
\end{equation*}
where the inequality follows from \eqref{eq:dominating-diagonal}.
As a result,
\begin{equation*}
    (P^\star - \Pi)^{r+s}(P - \Pi)^{r+s} \preceq_\pi \nrm{(P^\star - \Pi)^{s}(P - \Pi)^{s}}_\pi (P^\star - \Pi)^{r}(P - \Pi)^{r}.
\end{equation*}
The lemma then holds as $\nrm{\cdot}_\pi$ is monotone for $\succeq_\pi$ and homogeneous.
\end{proof}

It will also be convenient for us to relate the pseudo-spectral gap of $P$ to that of $P^p$, governing the dynamics of the $p$-skipped Markov chain defined in \eqref{eq:skipped-chain}.

\begin{lemma}
\label{lemma:pseudo-spectral-gap-skipped-correspondence}
For any $p \in \N$,
\begin{equation*}
    p \gamma \ps \left(1 - \frac{p k \ps \gamma \ps}{2}\right) < \gamma \ps^{(p)} \leq p \gamma \ps,
\end{equation*}
where $\gamma^{(p)} \ps \eqdef \gamma \ps(P^p)$ is the pseudo-spectral gap of the $p$-skipped Markov chain.
As a consequence,
\begin{equation*}
    p \leq \frac{1}{k \ps \gamma \ps} \implies \gamma \ps^{(p)} \geq p \gamma \ps /2.
\end{equation*}
\end{lemma}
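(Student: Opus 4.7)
The plan is to bound $\gamma \ps^{(p)} = \max_{k\in\N} \gamma_\dagger(P^{pk})/k$ from above and below separately.

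The upper bound is immediate from the definition of $\gamma \ps$ as a maximum: for every $k \in \N$, the choice $k' = pk$ in the outer max for $\gamma \ps$ yields $\gamma_\dagger(P^{pk})/(pk) \leq \gamma \ps$, so $\gamma_\dagger(P^{pk})/k \leq p\gamma \ps$, and taking the max over $k$ gives $\gamma \ps^{(p)} \leq p\gamma \ps$.

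For the lower bound, I would plug the (in general suboptimal) choice $k = k \ps$ into the defining maximum of $\gamma \ps^{(p)}$, obtaining $\gamma \ps^{(p)} \geq \gamma_\dagger(P^{p k \ps})/k \ps$. Then I would use Corollary~\ref{corollary:technical-correspondence-skipped} to rewrite $1 - \gamma_\dagger(P^{p k \ps}) = \nrm{(P^\star - \Pi)^{p k \ps}(P - \Pi)^{p k \ps}}_\pi$ and iterate the sub-multiplicativity Lemma~\ref{lemma:sub-multiplicativity-property} $p$ times (splitting $pk \ps$ into $p$ copies of $k \ps$) to get
\begin{equation*}
1 - \gamma_\dagger(P^{p k \ps}) \leq \nrm{(P^\star - \Pi)^{k \ps}(P - \Pi)^{k \ps}}_\pi^p = (1 - \gamma_\dagger(P^{k \ps}))^p = (1 - k \ps \gamma \ps)^p,
\end{equation*}
where the final equality uses the defining identity $\gamma \ps = \gamma_\dagger(P^{k \ps})/k \ps$.

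Next I would apply the elementary inequality $(1-x)^p \leq 1 - px + \binom{p}{2} x^2$, valid for $x \in [0,1]$ and $p \in \N$, to $x = k \ps \gamma \ps$ (which lies in $[0,1]$). This inequality can be established in a line by setting $f(x) = 1 - px + \binom{p}{2}x^2 - (1-x)^p$, noting $f(0) = f'(0) = 0$, and checking that $f''(x) = p(p-1)(1 - (1-x)^{p-2}) \geq 0$ on $[0,1]$. Combining, I obtain
\begin{equation*}
\gamma \ps^{(p)} \geq \frac{1 - (1 - k \ps \gamma \ps)^p}{k \ps} \geq p \gamma \ps \left(1 - \frac{(p-1) k \ps \gamma \ps}{2}\right) > p \gamma \ps \left(1 - \frac{p\, k \ps \gamma \ps}{2}\right),
\end{equation*}
which is the lower bound claimed. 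The consequence is then immediate: if $p \leq 1/(k \ps \gamma \ps)$, then $p k \ps \gamma \ps \leq 1$, so the parenthetical factor is $\geq 1/2$ and $\gamma \ps^{(p)} \geq p \gamma \ps/2$.

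I do not anticipate a serious obstacle here; the only non-pushbutton step is the binomial inequality, but it is standard and yields to the two-derivative argument above. The main idea I would want to highlight is that the iterated use of Lemma~\ref{lemma:sub-multiplicativity-property} is what converts the one-step gap $k \ps \gamma \ps$ into a $p$-step comparison, and the binomial expansion is what extracts the leading linear-in-$p$ growth with a controlled quadratic correction.
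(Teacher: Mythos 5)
Your proposal is correct and follows essentially the same route as the paper: the same relabeling argument for the upper bound, and for the lower bound the same choice $k = k\ps$, Corollary~\ref{corollary:technical-correspondence-skipped}, iterated sub-multiplicativity from Lemma~\ref{lemma:sub-multiplicativity-property}, and a Taylor-type binomial inequality proved by the identical two-derivative argument (the paper's Lemma~\ref{lemma:taylor-type-inequality} uses $p^2x^2/2$ where you use the marginally sharper $\binom{p}{2}x^2$, which has the small benefit of delivering the strict inequality in the statement directly).
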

\begin{proof}
For the other upper bound,
\begin{equation*}
\begin{split}
\gamma \ps(P^p) &= \max_{k \in \N} \set{ \frac{1}{k} \gamma \left((P^{pk})^{\star} P^{pk}\right)} = p\max_{k \in \N} \set{ \frac{1}{pk} \gamma \left((P^{pk})^{\star} P^{pk}\right)}\\
&\leq p \max_{r \in \N} \set{ \frac{1}{r} \gamma \left((P^{r})^\star P^{r} \right)} = p \gamma \ps(P), \\
\end{split}
\end{equation*}
where the inequality follows from taking the maximum over a larger set.
For the lower bound,
\begin{equation*}
\begin{split}
    \gamma \ps(P^p) &= \max_{k \in \N} \set{ \frac{\gamma_\dagger\left(P^{pk}\right)}{k} } \geq  \frac{\gamma_\dagger\left(P^{pk \ps}\right)}{k \ps} \stackrel{(i)}{=}  \frac{1 - \nrm{(P-\Pi)^{pk \ps}(P^\star-\Pi)^{p k \ps}}_\pi}{k \ps} \\
    &\stackrel{(ii)}{\geq}  \frac{1 - \nrm{(P-\Pi)^{k \ps}(P^\star-\Pi)^{k \ps}}_\pi^p}{k \ps}  =  \frac{1 - \left( 1 - \gamma_\dagger\left(P^{k \ps}\right) \right)^p}{k \ps}  \\
    &= \frac{1 - (1 - k \ps \gamma \ps)^p}{k \ps} \stackrel{(iii)}{\geq} p \gamma \ps   \left(1 - \frac{p k \ps \gamma \ps}{2} \right),\\
\end{split}
\end{equation*}
where $(i)$ is Corollary~\ref{corollary:technical-correspondence-skipped}, $(ii)$ stems from the sub-multiplicativity property of Lemma~\ref{lemma:sub-multiplicativity-property}, and $(iii)$ is a consequence of the auxiliary
Lemma~\ref{lemma:taylor-type-inequality}.
\end{proof}
Finally, when the skipping rate is larger than the order of $\gamma^{-1}  \ps$, the pseudo-spectral gap of the skipped chain is larger than a universal constant. 

\begin{lemma}
\label{lemma:termination-inequality}
It holds that for any $p \in \N$,
\begin{equation*}
    p \geq 2^{\ceil*{\log_2 1 /\gamma \ps}} \implies \gamma^{(p)} \ps > 1/2.
\end{equation*}
\end{lemma}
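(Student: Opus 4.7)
The plan is to take $k=1$ in the maximum defining $\gamma\ps(P^p)$, which by Corollary~\ref{corollary:technical-correspondence-skipped} gives $\gamma^{(p)}\ps \geq \gamma_\dagger(P^p) = 1 - \nu_p$, where
\begin{equation*}
\nu_p \eqdef \nrm{(P^\star - \Pi)^p (P - \Pi)^p}_\pi.
\end{equation*}
It therefore suffices to show $\nu_p < 1/2$ under the hypothesis $p \geq 2^{\lceil \log_2 1/\gamma\ps\rceil}$.

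Next, I would apply the sub-multiplicativity from Lemma~\ref{lemma:sub-multiplicativity-property} iteratively. Writing $p = n k\ps + r$ with $n \eqdef \lfloor p/k\ps\rfloor$ and $0 \leq r < k\ps$, this yields
\begin{equation*}
\nu_p \leq \nu_{k\ps}^n \cdot \nu_r \leq \nu_{k\ps}^n = (1 - k\ps \gamma\ps)^n,
\end{equation*}
where I use the defining identity $\nu_{k\ps} = 1 - k\ps\gamma\ps$ (a direct consequence of Corollary~\ref{corollary:technical-correspondence-skipped} combined with the minimality of $k\ps$), together with the crude bound $\nu_r \leq 1$. The latter follows from $\nrm{P - \Pi}_\pi \leq 1$, which itself holds because $P$ acts as a contraction on the mean-zero subspace of $\ell_2(\pi)$ (by Jensen applied to the Markov kernel), while $\Pi$ is the orthogonal projection onto the constants.

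To conclude, set $y \eqdef k\ps\gamma\ps \in (0, 1]$. The hypothesis $p \geq 2^{\lceil \log_2 1/\gamma\ps\rceil} \geq 1/\gamma\ps$ immediately yields $n \geq \lfloor 1/y\rfloor \eqdef N$. Since $y > 1/(N+1)$ by definition of $N$, the monotonicity of $t \mapsto (1-y)^t$ then delivers
\begin{equation*}
\nu_p \leq (1-y)^n \leq (1-y)^N < \left(\frac{N}{N+1}\right)^N = \frac{1}{(1+1/N)^N} \leq \frac{1}{2},
\end{equation*}
the last step invoking the classical inequality $(1 + 1/N)^N \geq 2$, valid for every integer $N \geq 1$.

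The only genuine obstacle here is bookkeeping: absorbing the remainder $r$ through the estimate $\nu_r \leq 1$, and carefully tracking the two successive floor operations so that the strict inequality survives the whole chain. Once this is handled, the core of the argument is the short elementary computation displayed above.
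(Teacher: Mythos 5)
Your proposal is correct and follows essentially the same route as the paper: reduce to $\gamma_\dagger(P^p)$ via Corollary~\ref{corollary:technical-correspondence-skipped}, perform the Euclidean division $p = n\kps + r$, and apply the sub-multiplicativity of Lemma~\ref{lemma:sub-multiplicativity-property} to get $(1-\kps\gammaps)^{\lfloor p/\kps\rfloor} < 1/2$. The only (cosmetic) differences are that you close the elementary inequality via $y > 1/(N+1)$ and $(1+1/N)^N \geq 2$ rather than the paper's case analysis of $t \mapsto (1-t)^{\floor{1/t}}$, and that you explicitly track general $p$ above the threshold rather than just the boundary case.
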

\begin{proof}
By definition,
\begin{equation*}
    \gamma^{\left(2^{\ceil{\log_2 1/{\gamma \ps}}}\right)} \ps 
    \geq \gamma_\dagger\left( P^{2^{\ceil{\log_2 1/{\gamma \ps}}}}\right).
\end{equation*}
We have from Corollary~\ref{corollary:technical-correspondence-skipped} that
\begin{equation*}
\begin{split}
\gamma_\dagger\left( P^{2^{\ceil{\log_2 1/{\gamma \ps}}}}\right) &= 1 - \nrm{(P^\star - \Pi)^{2^{\ceil{\log_2 1/{\gamma \ps}}}}(P - \Pi)^{2^{\ceil{\log_2 1/{\gamma \ps}}}}}_{\pi}. \\
\end{split}
\end{equation*}
Writing $2^{\ceil{\log_2 1/{\gamma \ps}}} = k \ps q \ps + r \ps$ with $0 \leq r \ps < k \ps$ and $q \ps = \floor*{2^{\ceil{\log_2 1/{\gamma \ps}}} / k \ps }$, it follows from the sub-multiplicativity property of Lemma~\ref{lemma:sub-multiplicativity-property} that
\begin{equation*}
\begin{split}
&\nrm{(P^\star - \Pi)^{2^{\ceil{\log_2 1/{\gamma \ps}}}}(P - \Pi)^{2^{\ceil{\log_2 1/{\gamma \ps}}} }}_{\pi} \\
\leq& \nrm{(P^\star - \Pi)^{k \ps  }(P - \Pi)^{k \ps }}_{\pi}^{q \ps} \nrm{(P^\star - \Pi)^{r \ps  }(P - \Pi)^{r \ps }}_{\pi} \\
=& \left(1 - \gamma_\dagger\left(P^{k \ps}\right)\right)^{\floor*{2^{\ceil{\log_2 1/{\gamma \ps}}} / k \ps }} (1 - \gamma_\dagger(P^{r \ps})) \\
\leq& \left(1 - \gamma_\dagger\left(P^{k \ps}\right)\right)^{\floor*{1/{(\gamma \ps k \ps)}  }}. \\
\end{split}
\end{equation*}
Moreover, from the definition of $k \ps$, it holds that $\gamma_\dagger(P^{k \ps}) = \gamma \ps k \ps \in (0, 1]$.
It remains to show that for any $t \in (0,1]$, $(1 - t)^{\floor{1/t}} < 1/2$, as illustrated in Figure~\ref{figure:technical-lemma}.
\begin{figure}[H]
\centering

\begin{tikzpicture}

\begin{axis}[
    legend cell align={left},
    width=12cm,
    height=5cm,
    axis lines = left,
    xlabel = $t$
]
\draw ({axis cs:8.48388725519182,0}|-{rel axis cs:0,0}) -- ({axis cs:8.48388725519182,0}|-{rel axis cs:0,1});

\addplot [
    domain=0:1, 
    jump mark mid,
    samples=1000, 
    style=thick,
    color=black
    ]
    {(1 - x)^(floor(1/x))};

\addlegendentry{$(1 - t)^{\floor{1/t}}$}
\addplot [
    domain=0:1, 
    samples=1000, 
    color=red,
    ]
    {(1 - x)^(1/x - 1)};
\addlegendentry{$(1 - t)^{1/t - 1}$}

\addplot [
    domain=0:1, 
    samples=1000, 
    color=blue,
    ]
    {(1 - x)};
\addlegendentry{$1 - t$}

\addplot[only marks,black] coordinates {
    (1/10,0.348678)
    (1/9,134217728/387420489)
    (1/8,5764801/16777216)
    (1/7,279936/823543)
    (1/6,15625/46656)
    (1/5,0.32768)
    (1/4,81/256)
    (1/3,8/27)
    (1/2,1/4)
    (1,0)
};

\end{axis}
\end{tikzpicture}

\caption{For $t \in (0,1]$, $(1 - t)^{\floor{1/t}} < 1/2$.}
\label{figure:technical-lemma}

\end{figure}

Suppose first that $t \in (1/2, 1]$. Then $\floor{1/t} = 1$, and $(1 - t)^{\floor{1/t}} = 1 - t \in [0, 1/2)$. Suppose now that $t \in [0, 1/2)$. We have
$(1 - t)^{\floor{1/t}} < (1 - t)^{1/t - 1}$ and $t \rightarrow (1 - t)^{1/t - 1}$ is strictly increasing on $[0, 1/2)$, with $\lim_{t \to 1/2} (1 - t)^{1/t - 1} = 1/2$, thus $(1 - t)^{\floor{1/t}} < 1/2$ on $[0, 1/2)$. Finally, at $t = 1/2$, it holds that $(1 - t)^{\floor{1/t}} = 1/4 < 1/2$, whence the claim.

\end{proof}

It is instructive to compare Lemma~\ref{lemma:pseudo-spectral-gap-skipped-correspondence} and Lemma~\ref{lemma:termination-inequality} with \citet[Proposition~2.1]{wolfer2022empirical}, and observe the existence of a gap for $p$,
$ 1/k \ps <  p \gamma \ps  < 1$,
for which the above lemmata do not provide a lower bound on $\gamma \ps^{(p)}$ independently of $\pimin$. We handle the remaining range as follows.

\begin{lemma}
\label{lemma:pseudo-spectral-gap-skipped-shim}
Suppose that $p < \gamma^{-1} \ps$.
Then $\gamma \ps(P^p) > \frac{p \gamma \ps}{2\log(4e/\pimin) + 2}$.
\end{lemma}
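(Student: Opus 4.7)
The plan is to bypass direct spectral manipulations and instead leverage the two-sided bound \eqref{eq:pseudo-spectral-gap-controls-tmix} simultaneously on $P$ and on its $p$-skip $P^p$. The underlying observation is that total variation distance to stationarity is non-increasing along the chain, so the $p$-skipped chain has already mixed as soon as the original chain has; quantitatively,
\begin{equation*}
\tmix(P^p) \leq \ceil*{\tmix(P)/p} < \tmix(P)/p + 1.
\end{equation*}
Since $P$ is primitive and $\pi P = \pi$, the skipped chain $P^p$ remains ergodic with stationary distribution $\pi$ and minimum mass $\pimin$, so \eqref{eq:pseudo-spectral-gap-controls-tmix} applies to it verbatim.

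From there the argument chains mechanically. First, the lower bound in \eqref{eq:pseudo-spectral-gap-controls-tmix} applied to $P^p$ yields $\gamma \ps(P^p) \geq 1/(2 \tmix(P^p))$. Combining with the display above,
\begin{equation*}
\gamma \ps(P^p) > \frac{1}{2(\tmix(P)/p + 1)} = \frac{p}{2(\tmix(P) + p)}.
\end{equation*}
Next, the upper bound in \eqref{eq:pseudo-spectral-gap-controls-tmix} gives $\tmix(P) \leq \gamma \ps^{-1}\log(4e/\pimin)$, and combined with the hypothesis $p < \gamma \ps^{-1}$ it yields
\begin{equation*}
\tmix(P) + p < \gamma \ps^{-1}\log(4e/\pimin) + \gamma \ps^{-1} = \gamma \ps^{-1}\bigl(\log(4e/\pimin) + 1\bigr).
\end{equation*}
Substituting this into the previous display recovers exactly the claimed bound.

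The only delicate point is preserving strictness of the inequality, which is handed to us for free by $\ceil*{x} < x + 1$ for every real $x$. I do not anticipate a real obstacle here: this lemma patches precisely the window $1/(k \ps \gamma \ps) < p < 1/\gamma \ps$ left uncovered by Lemmata~\ref{lemma:pseudo-spectral-gap-skipped-correspondence} and \ref{lemma:termination-inequality}, and routing through $\tmix$ is the natural way to sidestep the loss that the submultiplicativity estimate of Lemma~\ref{lemma:sub-multiplicativity-property} incurs in this intermediate range, at the (acceptable) price of the logarithmic $\log(4e/\pimin)$ factor inherited from the $\tmix$--$\gamma \ps$ comparison.
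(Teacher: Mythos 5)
Your proof is correct and follows essentially the same route as the paper's: both pass through $\tmix(P^p) \leq \ceil{\tmix/p} \leq \tmix/p + 1$ and then apply the two-sided bound \eqref{eq:pseudo-spectral-gap-controls-tmix} to $P^p$ and $P$ respectively, with the hypothesis $p < \gamma\ps^{-1}$ absorbing the additive $+p$ term. The only cosmetic difference is where the strict inequality is harvested (you take it from $\ceil{x} < x+1$, the paper from $p\gamma\ps < 1$), which is immaterial.
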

\begin{proof}
It is easy to verify that $\tmix(P^p) \leq \ceil{\tmix/p} \leq \tmix/p + 1$ (see e.g. \citet{wolfer2022empirical}), and from \eqref{eq:pseudo-spectral-gap-controls-tmix},
\begin{equation*}
\begin{split}
\gamma \ps(P^p) \geq \frac{1}{2 \tmix(P^p)} \geq \frac{p}{2 (\tmix + p)} \geq \frac{p \gamma \ps}{2 ( \log(4e/\pimin) + p \gamma \ps)}.
\end{split}
\end{equation*}

\end{proof}

\subsection{Estimation of \texorpdfstring{$\gamma \ps$}{the pseudo-spectral gap} to additive and multiplicative error}

For two states $x$ and $x'$,
we define the natural counting random variables
\begin{equation}
\label{equation:definition-natural-counting-random-variables}
\begin{split}
N_{x}^{(k)} &\eqdef \sum_{t=1}^{\floor{(m-1)/k}} \pred{X_{1 + k(t-1)} = x},\\
N_{x x'}^{(k)} &\eqdef \sum_{t=1}^{\floor{(m-1)/k}} \pred{X_{1 + k(t-1)} = x, X_{1 + kt} = x'}, \\
 \widehat{Q}^{(k)}  &\eqdef \frac{1}{m - 1} \sum_{x,x' \in \calX} N_{x x'}^{(k)} e_x \trn e_{x'}, \qquad \widehat{\pi}^{(k)} \eqdef \frac{1}{m - 1} \sum_{x \in \calX}  N_x^{(k)} e_x.
\end{split}
\end{equation}
We will also use the shorthand notations
\begin{equation}
\begin{split}
\label{equation:def-random-variable-convenient-notation}
\Nmax^{(k)} \eqdef \max_{x \in \calX} N_x^{(k)}, \ \Nmin^{(k)} \eqdef \min_{x \in \calX} N_x^{(k)}. \\
\end{split}
\end{equation}
Additionally, we introduce
\begin{equation}
    \label{eq:laplacian-estimator}
    \widehat{L}^{(k)} = \sum_{x,x' \in \calX} \frac{N^{(k)}_{x x'}}{\sqrt{N^{(k)}_x N^{(k)}_{x'}}} e_x \trn e_{x'}.
\end{equation}
For $k=1$, we will omit superscripts and simply write $N_{x}, N_{x x'}, \Nmax, \Nmin, \widehat{\pi}, \widehat{Q}$ and $\widehat{L}$.
The astute reader will notice that our estimator in \eqref{eq:laplacian-estimator} is not well-defined when the skipped trajectory failed to visit some states.
However, we will later show that 
this event only occurs with low probability for our sampling regime, avoiding smoothing in this section.
We first recall the estimator and upper bound for the problem of estimating $\pimin$ in multiplicative error from a single trajectory of observations.

\begin{lemma}[{\citet[Theorem~1]{pmlr-v99-wolfer19a}}]
\label{lemma:estimation-pimin-relative}
Let $\eps, \delta \in (0,1)$.
Define the statistic $\widehat{\pi}_\star \colon \calX^m \to [0,1]$ by $$\widehat{\pi}_\star \eqdef \frac{1}{m-1} \Nmin,$$
where $\Nmin$ is defined in \eqref{equation:def-random-variable-convenient-notation}.
There exists a universal constant $c$ such that the following statement is true.
Let $X_1, \dots, X_m$ be a Markov chain over the state space $\calX$ with ergodic transition matrix $P$, minimum stationary probability $\pimin$, pseudo-spectral gap $\gamma \ps$ and arbitrary initial distribution. When
$$m \geq c \frac{1}{\gamma \ps \pimin \eps^2} \log \frac{1}{\delta \pimin},$$ it holds with probability at least $1 - \delta$ that 
\begin{equation*}
    \abs{\widehat{\pi}_\star(X_1, \dots, X_m) - \pimin} \leq \eps \pimin.
\end{equation*}
\end{lemma}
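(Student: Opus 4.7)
The plan is to apply a Bernstein-type concentration inequality for Markov chains --- specifically the one derived by \citet{paulin2015concentration} in terms of the pseudo-spectral gap --- to the empirical visit frequency of each individual state, then to pass from state-wise concentration to concentration of $\widehat{\pi}_\star$ via the fact that the minimum is $1$-Lipschitz, and finally to combine the resulting tail bounds by a union bound over $\calX$.

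The first concrete step is the following. For a fixed $x \in \calX$, set $f_x = \mathbf{1}_x - \pi(x)$, so that $N_x - (m-1)\pi(x) = \sum_{t=1}^{m-1} f_x(X_t)$. Then $\E[\pi]{f_x} = 0$, $\nrm{f_x}_\infty \leq 1$, and the stationary variance satisfies $\mathrm{Var}_\pi(f_x) = \pi(x)(1 - \pi(x)) \leq \pi(x)$. Paulin's Bernstein inequality for non-reversible chains (with the $\gamma\ps$-dependent rate) then yields a tail bound of the form
\begin{equation*}
\PR{\abs{N_x - (m-1)\pi(x)} \geq (m-1)\eta} \leq C_{\mu,\pi} \cdot \exp\!\left( -\frac{c (m-1) \gamma\ps \eta^2}{\pi(x) + \eta}\right),
\end{equation*}
where the prefactor $C_{\mu,\pi}$ accounts for the arbitrary (non-stationary) initial distribution $\mu$ and can be crudely bounded by the worst-case density $\nrm{\mu/\pi}_{2,\pi} \leq 1/\sqrt{\pimin}$.

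Choosing $\eta = \eps \pimin$ makes the denominator $\pi(x) + \eta$ at most $2\pi(x)$ (since $\pi(x) \geq \pimin$), so the exponent is at least of order $m \gamma\ps \eps^2 \pimin^2 / \pi(x) \geq m\gamma\ps \eps^2 \pimin$ only for states with $\pi(x)$ close to $\pimin$; for larger $\pi(x)$, the deviation requirement $\eps \pimin$ is slack and the tail bound is even stronger. A union bound over the $\abs{\calX} \leq 1/\pimin$ states of the chain, together with the initial-distribution factor $1/\sqrt{\pimin}$, produces the logarithmic term $\log(1/(\delta \pimin))$. Solving for the required $m$ recovers the stated sample complexity $m \gtrsim (\gamma\ps \pimin \eps^2)^{-1} \log(1/(\delta \pimin))$.

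To finish, I invoke the $1$-Lipschitz property of $\min$ under the supremum norm:
\begin{equation*}
\abs{\widehat{\pi}_\star - \pimin} = \abs{\min_{x \in \calX} \frac{N_x}{m-1} - \min_{x \in \calX} \pi(x)} \leq \max_{x \in \calX} \abs{\frac{N_x}{m-1} - \pi(x)},
\end{equation*}
so the above uniform deviation bound transfers directly to the target relative error $\eps\pimin$ for $\widehat{\pi}_\star$. The main technical subtlety lies in step two: Paulin's inequality in its cleanest form assumes a stationary start, so some care is needed to either absorb the arbitrary-initialization penalty $\nrm{\mu/\pi}_{2,\pi}$ into the confidence level (producing the $\log(1/\pimin)$ factor in the sample complexity) or alternatively to discard a logarithmic burn-in and restart from an approximately stationary distribution. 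Beyond this, the proof is mostly bookkeeping, and the result is exactly Theorem~1 of \citet{pmlr-v99-wolfer19a}, which we cite for the constants.
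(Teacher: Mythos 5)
The paper does not reprove this lemma; it imports it verbatim from \citet[Theorem~1]{pmlr-v99-wolfer19a}, so your proposal must be judged against that source's argument. Your overall architecture is the right one (Paulin's $\gamma\ps$-Bernstein inequality applied to the indicators $\mathbf{1}_x - \pi(x)$, a union bound over $\calX$, and the $\nrm{\mu/\pi}_\pi \leq 1/\pimin$ correction for an arbitrary start, which together produce the $\log(1/(\delta\pimin))$ factor). However, there is a genuine gap in the middle step, and the sentence meant to paper over it is backwards.

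The problem is your uniform deviation threshold $\eta = \eps\pimin$ combined with the Lipschitz bound $\abs{\min_x a_x - \min_x b_x} \leq \max_x \abs{a_x - b_x}$. With variance proxy $\pi(x)$, the Bernstein exponent for state $x$ at threshold $\eps\pimin$ is of order $m\gamma\ps\eps^2\pimin^2/\pi(x)$, which you correctly compute --- and which \emph{decreases} as $\pi(x)$ grows. Your claim that ``for larger $\pi(x)$ \ldots the tail bound is even stronger'' is therefore false: for a state with $\pi(x) = \Theta(1)$, forcing $\abs{N_x/(m-1) - \pi(x)} \leq \eps\pimin$ requires $m \gtrsim 1/(\gamma\ps\eps^2\pimin^2)$, an extra factor of $1/\pimin$ over the stated rate. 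The fix (and what the cited proof actually does) is to demand a \emph{per-state relative} deviation $\eta_x = \eps\pi(x)$, for which the exponent becomes $m\gamma\ps\eps^2\pi(x) \geq m\gamma\ps\eps^2\pimin$ uniformly in $x$. One then discards the Lipschitz step: on the event $\set{\forall x \colon \abs{N_x/(m-1) - \pi(x)} \leq \eps\pi(x)}$ one gets $\widehat{\pi}_\star \geq (1-\eps)\min_x\pi(x) = (1-\eps)\pimin$ from the lower bounds at every state, and $\widehat{\pi}_\star \leq N_{x_\star}/(m-1) \leq (1+\eps)\pimin$ by evaluating at a single minimizing state $x_\star$. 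With that replacement your argument goes through and recovers the stated sample complexity; as written, it only proves the weaker bound with $\pimin^2$ in the denominator.
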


Our first new result in this subsection quantifies the necessary trajectory length to estimate 
the matrix $L$ associated to $P$ --see \eqref{eq:laplacian-and-memoryless-stationary}-- with respect to the spectral norm. While \citet{hsu2019} leveraged reversibility on multiple occasions
for proving a similar claim, we show that this assumption is unnecessary,
hence the result holds more generally for ergodic chains with $\gamma \ps$ in lieu of $\gamma_\star$, modulo an extra cost in the logarithmic factors.

\begin{lemma}
\label{lemma:learn-laplacian-spectral-norm}

Let $\eps, \delta \in (0,1)$.
For the estimator $\widehat{L}$ defined in \eqref{eq:laplacian-estimator}, 
there exists a universal constant $c$ such that the following statement is true.
Let $X_1, \dots, X_m$ be a Markov chain over the state space $\calX$ with ergodic transition matrix $P$, minimum stationary probability $\pimin$, pseudo-spectral gap $\gamma \ps$, arbitrary initial distribution and $L$ matrix associated to $P$ defined in \eqref{eq:laplacian-and-memoryless-stationary}. When
\begin{equation*}
    m \geq \frac{c}{\gamma \ps \pimin \eps^2} \log \frac{1}{\pimin \delta} \log \frac{1}{\gamma \ps \pimin \delta \eps} ,
\end{equation*}
it holds --regardless of reversibility-- with probability at least $1 - \delta$ that 
\begin{equation*}
    \nrm{\widehat{L}(X_1, \dots, X_m) - L} \leq \eps.
\end{equation*}
\end{lemma}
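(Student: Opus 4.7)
My plan is to follow the blueprint of \citet{hsu2019} for the reversible case, tracking each use of self-adjointness of $L$ and substituting the $\gamma \ps$-based Bernstein inequality of \citet{paulin2015concentration} for the $\gamma_\star$-based one. The argument has three steps.

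First, I would establish a good event of probability at least $1-\delta/2$ on which $|\widehat{\pi}(x) - \pi(x)| \leq \pi(x)/4$ uniformly in $x \in \calX$: a minor strengthening of Lemma~\ref{lemma:estimation-pimin-relative}, obtained by applying \citeauthor{paulin2015concentration}'s Bernstein inequality to each $\pred{X_t = x}$ (whose stationary mean and variance are both at most $\pi(x)$) and union-bounding over the $|\calX| \leq 1/\pimin$ states; only $\widetilde{\bigO}(1/(\gamma \ps \pimin))$ of the total sample budget is spent here. On this event, the hybrid estimator $\widetilde{L} \eqdef \Dpi^{-1/2} \widehat{Q} \Dpi^{-1/2}$ (true stationary marginals, empirical joint counts $\widehat{Q}$) satisfies $\widehat{L} = \diag(\sqrt{\pi/\widehat{\pi}})\,\widetilde{L}\,\diag(\sqrt{\pi/\widehat{\pi}})$, from which elementary operator-norm manipulations give $\nrm{\widehat{L} - \widetilde{L}} \lesssim \eps \nrm{\widetilde{L}}$. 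Triangulating $\widehat{L} - L = (\widehat{L} - \widetilde{L}) + (\widetilde{L} - L)$ then reduces the problem to controlling $\nrm{\widetilde{L} - L} = \nrm{\Dpi^{-1/2}(\widehat{Q} - Q)\Dpi^{-1/2}}$, with $Q \eqdef \Dpi P$.

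The main step is the matrix concentration for the Markov-dependent sum
\begin{equation*}
\widetilde{L} - L = \frac{1}{m-1}\sum_{t=1}^{m-1} Z_t, \qquad Z_t \eqdef \frac{e_{X_t}\trn e_{X_{t+1}}}{\sqrt{\pi(X_t)\pi(X_{t+1})}} - L,
\end{equation*}
whose summands are mean zero in stationarity, have spectral norm $\nrm{Z_t} \leq 2/\pimin$, and matrix-variance proxies $\nrm{\E{Z_t Z_t\trn}} \vee \nrm{\E{Z_t\trn Z_t}} = \bigO(1/\pimin)$. To tame the Markov dependence, I would pass to the $k$-skipped sub-chain with $k = \Theta(1/\gamma \ps)$, whose pseudo-spectral gap is bounded below by a universal constant by Lemma~\ref{lemma:termination-inequality}, and apply matrix Bernstein to the resulting near-independent blocks. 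The tail is of the form $|\calX|\exp(-c \eps^2 (m-1)\gamma \ps \pimin)$; inverting against $\delta/2$ and combining the dimension factor $\log|\calX| \leq \log(1/\pimin)$ with a \citeauthor{paulin2015concentration}-type burn-in factor of order $\log m$ yields precisely the two logarithms appearing in the stated sample complexity.

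The anticipated obstacle is exactly this matrix concentration step. Unlike the reversible setting, where $L$ is self-adjoint on $\ell_2(\pi)$ and Hermitian matrix Bernstein applies directly, here $L$ is non-normal, so both variance proxies $\E{Z_t Z_t\trn}$ and $\E{Z_t\trn Z_t}$ must be controlled simultaneously (e.g., via a Hermitian dilation of $Z_t$), and one must verify that the blocking/skipping argument is compatible with the per-consecutive-pair structure of $Z_t$. This is where the extra logarithmic factor relative to the sharper reversible bound of \citet{hsu2019} naturally enters, as flagged in the statement of the lemma.
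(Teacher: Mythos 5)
Your overall blueprint does match the paper's proof: the same splitting of $\widehat{L}-L$ into the term $\Dpi^{-1/2}(\widehat{Q}-Q)\Dpi^{-1/2}$ plus diagonal perturbations driven by $\widehat{\pi}/\pi$ (your hybrid $\widetilde{L}$ and the identity $\widehat{L}=\diag(\sqrt{\pi/\widehat{\pi}})\,\widetilde{L}\,\diag(\sqrt{\pi/\widehat{\pi}})$ is just a repackaging of the paper's $\EE{\pi,1},\EE{\pi,2}$ decomposition), the same use of \citeauthor{paulin2015concentration}'s $\gamma\ps$-Bernstein inequality for the scalar counts, and the same non-Hermitian matrix Bernstein inequality requiring both variance proxies, exactly as you anticipate. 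The genuine gap is in your central dependence-taming step. ``Passing to the $k$-skipped sub-chain with $k=\Theta(1/\gamma\ps)$'' does not work here: consecutive pairs of the skipped chain are transitions of $P^k$, so an estimator built from them targets $L^k$, not $L$, and Lemma~\ref{lemma:termination-inequality} (a statement about $\gamma\ps(P^p)$) is not the relevant tool. Even under the charitable reading---group the $Z_t$ into consecutive blocks and argue the block sums are ``near-independent''---matrix Bernstein requires genuine independence, so one must invoke the decoupling result for $\beta$-mixing sequences \citep[Corollary~2.7]{yu1994rates}, pay $(B-1)\beta(s)$ in probability, and bound $\beta(s)$ via \citet[(3.10)]{paulin2015concentration}. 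This forces the block length to be $s=\Theta\bigl(\gamma\ps^{-1}\log(m/(\sqrt{\pimin}\,\delta))\bigr)$, not $\Theta(1/\gamma\ps)$, and it is this $\log m$ inside $s$, resolved by solving the resulting implicit inequality in $m$, that produces the second logarithm in the sample complexity---not a ``burn-in'' factor. You flag the compatibility of blocking with the pair structure of $Z_t$ as an obstacle but do not resolve it, and as literally written the step fails.

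A second omission: the lemma allows an arbitrary initial distribution, while all your means, variance proxies and mixing estimates are computed under the stationary law. The paper closes this with \citet[Proposition~3.15]{paulin2015concentration}, namely $\PR[\mu]{A}\le\sqrt{\nrm{\mu/\pi}_{\pi}}\,\PR[\pi]{A}^{1/2}$ together with $\nrm{\mu/\pi}_{\pi}\le 1/\pimin$, which only inflates the logarithmic factors; some such reduction must be supplied for the statement as claimed.
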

\begin{proof}
The proof of this lemma is deferred to Section~\ref{section:proofs-laplacian}.
\end{proof}

We proceed to estimate $\gamma \ps$, first to additive error $\eps \in (0,1)$. Following \citet{pmlr-v99-wolfer19a}, we analyze a truncated version of the empirical pseudo-spectral gap.
Namely, for some fixed $K \in \N$, we define the estimator
\begin{equation}
    \label{eq:pseudo-spectral-gap-estimator-absolute}
    \widehat{\gamma} \psK \eqdef \max_{k \in [K]} \set{ \frac{1}{k}\gamma_\star \left(\left(\widehat{L}^{(k) }\right)\trn\widehat{L}^{(k) }\right)}.
\end{equation}

We immediately note that since the above estimator is computed for only a prefix $[K]$ of the integers, it will naturally incur an approximation error.
Our next result takes the approach of \citet{pmlr-v99-wolfer19a} to translate the improved result of Lemma~\ref{lemma:learn-laplacian-spectral-norm} to an estimation upper bound for $\gamma \ps$.

\begin{theorem}[Arbitrary additive error]
\label{theorem:pseudo-spectral-gap-estimation-absolute}
Let $\eps, \delta \in (0,1)$.
For the estimator $\widehat{\gamma} \psK$ defined in \eqref{eq:pseudo-spectral-gap-estimator-absolute} with $K = \ceil{2 /\eps}$, 
there exists a universal constant $c$ such that the following statement is true.
Let $X_1, \dots, X_m$ be a Markov chain over the state space $\calX$ with ergodic transition matrix $P$, minimum stationary probability $\pimin$, pseudo-spectral gap $\gamma \ps$, arbitrary initial distribution. When
\begin{equation*}
    m \geq \frac{c}{\gamma \ps \pimin \eps^2} \log \frac{1}{\pimin} \log \frac{1}{\pimin \eps \delta} \log \frac{1}{\pimin \gamma \ps \eps \delta} ,
\end{equation*}
it holds with probability at least $1 - \delta$ that 
\begin{equation*}
    \abs{\estpssg - \gamma \ps} \leq \eps.
\end{equation*}
\end{theorem}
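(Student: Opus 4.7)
The plan is to decompose the overall error into a \emph{truncation} component and an \emph{empirical} component. Let $\gamma\psK \eqdef \max_{k \in [K]} \gamma_\dagger(P^k)/k$. The triangle inequality gives $|\estpssgK - \gamma\ps| \leq |\estpssgK - \gamma\psK| + |\gamma\psK - \gamma\ps|$ (I will have to relabel the estimator accordingly), and I target $\eps/2$ for each piece.

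The truncation piece is handled by a short case analysis on the optimal skipping factor $k\ps$. If $k\ps \leq K = \ceil{2/\eps}$, then by definition $\gamma\psK = \gamma\ps$ and there is nothing to pay. Otherwise $k\ps > K$, and since $\gamma_\dagger(P^{k\ps}) \leq 1$, one gets $\gamma\ps \leq 1/k\ps \leq 1/K \leq \eps/2$, while the truncated max is nonnegative and bounded above by $\gamma\ps$, so again $|\gamma\psK - \gamma\ps| \leq \eps/2$.

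For the empirical piece, I use $|\max_k a_k - \max_k b_k| \leq \max_k |a_k - b_k|$ to reduce to a per-$k$ bound: it suffices to show, for every $k \in [K]$, that
\begin{equation*}
\abs{\gamma_\star\bigl((\widehat{L}^{(k)})\trn \widehat{L}^{(k)}\bigr) - \gamma_\dagger(P^k)} \leq k\eps/2.
\end{equation*}
Since $L^{(k)} = \Dpi^{1/2} P^k \Dpi^{-1/2}$ is similar to $P^k$, the matrices $(L^{(k)})\trn L^{(k)}$ and $(P^k)^\star P^k$ share a spectrum, so $\gamma_\dagger(P^k) = \gamma_\star((L^{(k)})\trn L^{(k)})$. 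A standard Weyl / Hoffman--Wielandt perturbation bound, together with $\nrm{L^{(k)}} \leq 1$, then reduces the task to controlling $\nrm{\widehat{L}^{(k)} - L^{(k)}}$ up to a constant multiple of $k\eps$. Applying Lemma~\ref{lemma:learn-laplacian-spectral-norm} to the $k$-skipped chain (which has stationary distribution $\pi$, pseudo-spectral gap $\gamma\ps^{(k)}$, and length $\approx m/k$) with confidence budget $\delta/K$ and error $ck\eps$, the requirement translates to
\begin{equation*}
m \geq \frac{c'}{k\,\gamma\ps^{(k)}\,\pimin\,\eps^2}\,\polylog.
\end{equation*}

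The main obstacle is uniformly lower-bounding $k\gamma\ps^{(k)}$ over the full range $k \in [K]$, which may extend well beyond $1/\gamma\ps$ when $\gamma\ps$ is small. I will split into three regimes and invoke the preparatory lemmas of Section~\ref{section:properties-pseudo-spectral-gap}: for $k \leq 1/(k\ps \gamma\ps)$, Lemma~\ref{lemma:pseudo-spectral-gap-skipped-correspondence} gives $k\gamma\ps^{(k)} \geq k^2\gamma\ps/2 \geq \gamma\ps/2$; for $1/(k\ps\gamma\ps) < k < 1/\gamma\ps$, Lemma~\ref{lemma:pseudo-spectral-gap-skipped-shim} yields $k\gamma\ps^{(k)} \geq k^2\gamma\ps / (2\log(4e/\pimin)+2) \geq \gamma\ps/\polylog(1/\pimin)$; for $k \geq 1/\gamma\ps$, the elementary mixing-time bound $\tmix(P^k) \leq \ceil{\tmix/k}$ combined with \eqref{eq:pseudo-spectral-gap-controls-tmix} (and, in the larger dyadic range, Lemma~\ref{lemma:termination-inequality}) gives $k\gamma\ps^{(k)} \gtrsim 1/(\gamma\ps \log(1/\pimin))$, which is at least as favorable. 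In all three regimes the per-$k$ sample requirement collapses into $m \geq \tilde{\bigO}\bigl(1/(\gamma\ps \pimin \eps^2)\bigr)$ with only logarithmic overheads; a union bound over the $K = \ceil{2/\eps}$ skipping factors accounts for the additional $\log(1/\eps)$ factor, and the stated form of the sample complexity follows.
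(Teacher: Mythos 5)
Your proposal is correct and follows essentially the same route as the paper's proof: truncation at $K=\ceil{2/\eps}$ costing at most $1/K\le\eps/2$, reduction of the empirical part to $\nrm{\widehat{L}^{(k)}-L^k}$ via Weyl's inequality and $\nrm{L^k}=1$, an application of Lemma~\ref{lemma:learn-laplacian-spectral-norm} to each $k$-skipped chain, a lower bound on $k\gamma\ps^{(k)}$ from the Section~\ref{section:properties-pseudo-spectral-gap} lemmata, and a union bound over $k\in[K]$. Your three-regime lower bound on $k\gamma\ps^{(k)}$ is in fact slightly more careful than the paper, which only invokes Lemma~\ref{lemma:pseudo-spectral-gap-skipped-shim} (stated for $k<1/\gamma\ps$) and leaves the range $k\ge 1/\gamma\ps$ implicit, whereas you handle it explicitly via $\tmix(P^k)\le\ceil{\tmix/k}$ and \eqref{eq:pseudo-spectral-gap-controls-tmix}.
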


\begin{proof}

\emph{Reduction to a maximum over a finite number of estimators.}

We apply the reduction of \citet{pmlr-v99-wolfer19a}.
For $K \in \mathbb{N}$, we write
$$\gamma \psK = \max_{k \in [K]} \set{ \frac{1}{k}\gamma_\dagger(P^k)},$$
and obtain
\begin{equation*}
\begin{split}
\abs{\widehat{\gamma} \psK - \gamma \ps} &\stackrel{(i)}{\leq} \frac{1}{K} + \abs{\widehat{\gamma} \psK - \gamma \psK} \\
&\stackrel{(ii)}{\leq} \frac{1}{K} + \max_{k \in [K]} \set{  \frac{1}{k} \abs{ \gamma_\star \left( \left(\widehat{L}^{(k)}\right) \trn \widehat{L}^{(k)} \right) - \gamma_\star \left( \left(L^{k}\right) \trn L^{k} \right) } } \\
&= \frac{1}{K} + \max_{k \in [K]} \set{  \frac{1}{k} \abs{ \max \set{\widehat{\lambda}^{(k) \downarrow}_2, \widehat{\lambda}^{(k) \downarrow}_{\abs{\calX}}} -  \max \set{\lambda^{(k), \downarrow}_2, \lambda^{(k) \downarrow}_{\abs{\calX}}} } } \\
&\leq \frac{1}{K} + \max_{k \in [K]} \set{  \frac{1}{k} \abs{ \max \set{ \abs{ \widehat{\lambda}^{(k) \downarrow}_2 - \lambda^{(k), \downarrow}_2}, \abs{ \widehat{\lambda}^{(k) \downarrow}_{\abs{\calX}} - \lambda^{(k) \downarrow}_{\abs{\calX}}  } }  } } \\
&\stackrel{(iii)}{\leq} \frac{1}{K} + \max_{k \in [K]} \set{  \frac{1}{k}  \nrm{ \left(\widehat{L}^{(k)}\right) \trn \widehat{L}^{(k)} -  \left(L^{k}\right) \trn L^{k}}},
\end{split}
\end{equation*}
where for simplicity we introduced  $\left(\widehat{\lambda}^{(k) \downarrow}_x\right)_{x \in \calX}$ and $\left(\lambda^{(k) \downarrow}_x\right)_{x \in \calX}$ the ordered spectra of $ \left(\widehat{L}^{(k)}\right) \trn \widehat{L}^{(k)}$ and $\left(L^{k}\right) \trn L^{k}$ respectively.
Inequality $(i)$ is a consequence of Markov kernels having a unit spectral radius, 
$(ii)$ is entailed by sub-additivity of the $\nrm{\cdot}_\infty$ norm in $\R^K$ and by matrix similarity, 
$(iii)$ is Weyl's inequality \citep[Corollary~4.9]{stewart1990matrix} for symmetric matrices.
We follow up by observing that by sub-additivity and sub-multiplicativity of the spectral norm, together with $\|L^k\| = 1$,
\begin{equation*}
\begin{split}
    \nrm{ \left(\widehat{L}^{(k)}\right) \trn \widehat{L}^{(k)} -  \left(L^{k}\right) \trn L^{k}} &= \nrm{ \left(\widehat{L}^{(k)}\right) \trn \widehat{L}^{(k)} - \left(\widehat{L}^{(k)}\right) \trn L^{k} + \left(\widehat{L}^{(k)}\right) \trn L^{k} - \left(L^{k}\right) \trn L^{k}} \\
    &\leq \nrm{ \left(\widehat{L}^{(k)}\right) \trn \widehat{L}^{(k)} - \left(\widehat{L}^{(k)}\right) \trn L^{k}} + \nrm{ \left(\widehat{L}^{(k)}\right) \trn L^{k} - \left(L^{k}\right) \trn L^{k}} \\
    &\leq \nrm{\left(\widehat{L}^{(k)}\right) \trn} \nrm{  \widehat{L}^{(k)} -  L^{k}} + \nrm{ \left(\widehat{L}^{(k)}\right) \trn  - \left(L^{k}\right) \trn} \nrm{L^{k}} \\
    &= 2 \nrm{  \widehat{L}^{(k)} -  L^{k}}.\\
\end{split}
\end{equation*}

\emph{Putting everything together.}

We set $K = \ceil{\eps/2}$ to bound the approximation error. 
Invoking Lemma~\ref{lemma:learn-laplacian-spectral-norm} for each $k$-skipped chain,
for 
\begin{equation*}
    m/k \geq \frac{c}{\gamma^{(k)} \ps \pimin k^2 \eps^2} \log \frac{K}{\pimin \delta} \log \frac{K}{\gamma \ps^{(k)} \pimin \delta k \eps} ,
\end{equation*}
with probability $1 - \delta/K$,
$\nrm{\widehat{L}^{(k)} - L^k} \leq \eps k / 4$. An application of Lemma~\ref{lemma:pseudo-spectral-gap-skipped-shim} followed by a union bound finishes proving the theorem.

\end{proof}

Our next objective is to obtain upper bounds for the estimation problem to multiplicative error $\eps$, for which we design two different procedures.
The first is inspired from a technique of \citet{levin2016estimating}, which consists in amplifying the estimator in Theorem~\ref{theorem:pseudo-spectral-gap-estimation-absolute} and which matches the minimax rate lower bound when $\eps > 5$. 
Namely, we define 
\begin{equation}
\label{eq:pseudo-spectral-gap-estimator-relative}
    \widehat{\gamma}\ps \eqdef \widehat{\gamma}^{(\widehat{K}_\star)}\ps / \widehat{K}_\star,
\end{equation}
where
$$\log_2 \widehat{K}_\star \eqdef \argmin_{p \in \set{0, 1, 2, \dots}} \set{ \widehat{\gamma}^{(2^p)} \ps > 3/8 },$$
and for $k \in \N$, $$\widehat{\gamma}^{(k)}\ps = \widehat{\gamma} \psK \left( X_{1}, X_{1 + k}, X_{1 + 2k}, \dots, X_{1 + \floor{(m-1)/k}k} \right)$$ 
is the estimator for $\gamma \ps^{(k)} \eqdef \gamma \ps (P^{k})$ to additive error defined in \eqref{eq:pseudo-spectral-gap-estimator-absolute}, with $K = 16$, and estimated from the $k$-skipped Markov chain.

\begin{theorem}[Constant multiplicative error]
\label{theorem:pseudo-spectral-gap-estimation-relative}

Let $\delta \in (0,1)$.
 For the amplified estimator $\widehat{\gamma} \ps$ defined in \eqref{eq:pseudo-spectral-gap-estimator-relative}, 
there exists a universal constant $c$ such that the following statement is true.
Let $X_1, \dots, X_m$ be a Markov chain over the state space $\calX$ with ergodic transition matrix $P$, minimum stationary probability $\pimin$, pseudo-spectral gap $\gamma \ps$, arbitrary initial distribution. When
\begin{equation*}
    m \geq c 
 \frac{1}{\gamma \ps \pimin} \log^2 \frac{1}{\pimin} \log \frac{\log 1/ \gamma \ps}{\pimin \delta} \log \frac{1}{\pimin \gamma \ps \delta},
\end{equation*}
it holds with probability at least $1 - \delta$ that 
\begin{equation*}
    \abs{\widehat{\gamma}\ps -\gamma \ps} \leq 5 \gamma \ps.
\end{equation*}
\end{theorem}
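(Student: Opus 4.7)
The strategy is to analyze the geometric doubling search, showing that on a high-probability event the stopping index $\widehat{K}_\star$ is pinned to the window $[1/(4\gammaps),\,2/\gammaps]$, so that the rescaled estimator $\widehat{\gamma}^{(\widehat{K}_\star)}\ps/\widehat{K}_\star$ inherits a constant multiplicative accuracy from the constant additive accuracy delivered by Theorem~\ref{theorem:pseudo-spectral-gap-estimation-absolute}.

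\textbf{Good event and termination.} Set $P \eqdef \ceil{\log_2(1/\gammaps)}$. For each $p \in \set{0,1,\ldots,P}$, apply Theorem~\ref{theorem:pseudo-spectral-gap-estimation-absolute} to the $2^p$-skipped chain with additive tolerance $\eps = 1/8$ (consistent with the hard-coded $K=16$) and confidence $\delta/(P+1)$. A union bound produces an event $\EE{}$ of probability at least $1 - \delta$ on which
\[
\bigl|\widehat{\gamma}^{(2^p)}\ps - \gamma\ps^{(2^p)}\bigr| \leq 1/8 \quad \text{for every } p \in \set{0,\ldots,P}.
\]
On $\EE{}$, Lemma~\ref{lemma:termination-inequality} gives $\gamma\ps^{(2^P)} > 1/2$, hence $\widehat{\gamma}^{(2^P)}\ps > 3/8$, so the search stops no later than $p = P$; in particular $\widehat{K}_\star \leq 2^P \leq 2/\gammaps$.

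\textbf{Lower bound on $\widehat{K}_\star$ and multiplicative accuracy.} The stopping condition $\widehat{\gamma}^{(\widehat{K}_\star)}\ps > 3/8$ combined with the additive accuracy $1/8$ yields $\gamma\ps^{(\widehat{K}_\star)} > 1/4$. Coupling this with the upper bound $\gamma\ps^{(p)} \leq p\,\gammaps$ of Lemma~\ref{lemma:pseudo-spectral-gap-skipped-correspondence} gives $\widehat{K}_\star > 1/(4\gammaps)$. Meanwhile, on $\EE{}$, $\widehat{\gamma}^{(\widehat{K}_\star)}\ps \leq \gamma\ps^{(\widehat{K}_\star)} + 1/8 \leq 9/8$. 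Consequently,
\[
\widehat{\gamma}\ps = \frac{\widehat{\gamma}^{(\widehat{K}_\star)}\ps}{\widehat{K}_\star} \in \left[\frac{3\gammaps}{16},\; \frac{9\gammaps}{2}\right],
\]
which implies $|\widehat{\gamma}\ps - \gammaps| \leq (7/2)\,\gammaps \leq 5\gammaps$.

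\textbf{Sample complexity.} The remaining work is to certify that a single trajectory of the stated length $m$ satisfies Theorem~\ref{theorem:pseudo-spectral-gap-estimation-absolute} simultaneously at every scale $p \in \set{0,\ldots,P}$. For fixed $p$, the required number of samples from the $2^p$-skipped chain scales as $\tilde{\bigO}\!\left(\frac{1}{\gamma\ps^{(2^p)}\pimin}\right)$, i.e., the original trajectory length must satisfy $m \gtrsim \frac{2^p}{\gamma\ps^{(2^p)}\pimin}\,\mathrm{polylog}$. The ratio $2^p/\gamma\ps^{(2^p)}$ is uniformly controlled: when $2^p \leq 1/\gammaps$, Lemma~\ref{lemma:pseudo-spectral-gap-skipped-shim} gives $\gamma\ps^{(2^p)} \geq 2^p \gammaps/(2\log(4e/\pimin) + 2)$, whence $2^p/\gamma\ps^{(2^p)} \lesssim \log(1/\pimin)/\gammaps$; when $2^p > 1/\gammaps$, Lemma~\ref{lemma:termination-inequality} yields $\gamma\ps^{(2^p)} > 1/2$ and $2^p/\gamma\ps^{(2^p)} \leq 4/\gammaps$. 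Substituting the worst-case bound into the sample requirement of Theorem~\ref{theorem:pseudo-spectral-gap-estimation-absolute}, rescaling the confidence by $\delta \mapsto \delta/(P+1)$ (which contributes the factor $\log(\log(1/\gammaps)/(\pimin\delta))$), and upper bounding $\log(1/(\pimin \gamma\ps^{(2^p)} \delta/(P+1)))$ by $\log(1/(\pimin\gammaps\delta))$ up to absorbable doubly-logarithmic terms, produces the sample-size requirement stated in the theorem.

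The main obstacle is the last point: bookkeeping the logarithmic factors uniformly over the $P+1$ scales. It is there that Lemmata~\ref{lemma:pseudo-spectral-gap-skipped-correspondence}--\ref{lemma:pseudo-spectral-gap-skipped-shim} pay off, by allowing us to trade the degenerate factor $1/\gamma\ps^{(2^p)}$ for $\log(1/\pimin)/(2^p \gammaps)$, so that the $2^p$ coming from sub-sampling cancels and only a single extra $\log(1/\pimin)$ is paid compared to the additive-error rate of Theorem~\ref{theorem:pseudo-spectral-gap-estimation-absolute}.
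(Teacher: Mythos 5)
Your proposal is correct and follows essentially the same route as the paper's proof: the same union-bounded good event over the dyadic scales $p\le\ceil{\log_2 1/\gamma\ps}$, termination via Lemma~\ref{lemma:termination-inequality}, the bound $\gamma\ps^{(k)}\le k\gamma\ps$ from Lemma~\ref{lemma:pseudo-spectral-gap-skipped-correspondence} to control the rescaling, and Lemma~\ref{lemma:pseudo-spectral-gap-skipped-shim} to trade $2^p/\gamma\ps^{(2^p)}$ for $\log(1/\pimin)/\gamma\ps$ in the sample-complexity bookkeeping. The only (immaterial) difference is that you pin $\widehat{K}_\star$ into $(1/(4\gamma\ps),2/\gamma\ps]$ and sandwich $\widehat{\gamma}\ps$ directly, whereas the paper bounds the two ratios $\widehat{\gamma}\ps/\gamma\ps$ and $\gamma\ps/\widehat{\gamma}\ps$ separately; both yield the claimed factor.
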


\begin{proof}

The proof adopts the strategy in \citet{wolfer2022empirical}, that adapts the amplification method of \citet{levin2016estimating} and \cite{hsu2019} to the case where it is not possible to have an exact correspondence between the relaxation time of a chain and its skipped version.
In this method, we first bound the probability for a different notion of distance (see \eqref{eq:relative-confidence-stronger-definition}) and will rely on tools we developed in Section~\ref{section:properties-pseudo-spectral-gap}.
First, fix $\eta \in (0,1/8)$.
We consider the estimator defined in \eqref{eq:pseudo-spectral-gap-estimator-relative}
where
$$\log_2 \widehat{K}_\star = \argmin_{p \in \set{0, 1, 2, \dots}} \set{ \widehat{\gamma}^{(2^p)} \ps > 1/4 + \eta },$$
and for $k \in \N$, $\widehat{\gamma}^{(k)}\ps = \widehat{\gamma} \psK \left( X_{1}, X_{1 + k}, X_{1 + 2k}, \dots, X_{1 + \floor{(m-1)/k}k} \right)$ 
is the estimator for $\gamma \ps^{(k)} \eqdef \gamma \ps (P^{k})$ to additive error defined in \eqref{eq:pseudo-spectral-gap-estimator-absolute}, with $K = \ceil{2/\eta}$, estimated from the $k$-skipped Markov chain. 
Let us define the events for $k \in \N$,
$$g(k, \eta) \eqdef \set{ \abs{\widehat{\gamma}^{(k)}\ps - \gamma^{(k)}\ps} \leq \eta }, \qquad G(\eta) \eqdef \bigcap_{p = 0}^{\overline{p}} g(2^p, \eta),$$
where $\overline{p} \eqdef \ceil{\log_2 1 / \gamma \ps}$. 
We will prove that for some universally fixed $\alpha \geq 5$,
\begin{equation}
\label{eq:relative-confidence-stronger-definition}
    \PR{\max \set{\frac{\widehat{\gamma} \ps}{\gamma\ps}, \frac{\gamma\ps}{\widehat{\gamma} \ps}} > 1 + \alpha} \leq \delta.
\end{equation}
Let us set
\begin{equation*}
    m \geq c \max_{p \in \set{0, 1, \dots, \overline{p}}} \set{
 2^p \frac{1}{\gamma^{(2^p)} \ps \pimin \eta^2} \log \frac{1}{\pimin} \log \frac{\log 1/ \gamma \ps}{\pimin \eta \delta} \log \frac{\log 1/ \gamma \ps}{\pimin \gamma^{(2^p)} \ps \eta \delta} },
\end{equation*}
and let us first assume that we are on the event $G(\eta)$.
We verify that the algorithm terminates before $\widehat{K}_\star = 2^{\overline{p}}$.
From Lemma~\ref{lemma:termination-inequality}, it holds that
\begin{equation*}
    \gamma^{\left(2^{\ceil{\log_2 1 /\gamma \ps}}\right)} \ps > 1/2.
\end{equation*}
Being on $G(\eta)$,
$$\widehat{\gamma}^{\left(2^{\ceil{\log_2 1 /\gamma \ps}}\right)} \geq \gamma^{\left(2^{\ceil{\log_2 1 /\gamma \ps}}\right)} - \eta > 1/2 - \eta > 1/4 + \eta,$$
where the last inequality stems from the choice of the range for $\eta$. It follows that the algorithm stops at power $2^{\overline{p}}$ at the latest.
For $\alpha \geq 2/(1/4 + \eta) - 1$ and $\alpha > 4 \eta$, on the event $G(\eta)$, we now prove that
\begin{equation*}
    \max \set{ \frac{\widehat{\gamma} \ps  }{\gamma\ps}, \frac{\gamma\ps}{\widehat{\gamma} \ps } } \leq 1 + \alpha.
\end{equation*}
On one hand, since
$$\widehat{K}_\star \leq 2^{\ceil{\log_2 1/ \gamma \ps}} \leq 2^{1 + \log_2 1/ \gamma \ps} = 2/\gamma \ps,$$ we have from the stopping rule that
\begin{equation*}
   \frac{\gamma\ps}{\widehat{\gamma} \ps } =
   \frac{\widehat{K}_\star \gamma\ps}{\widehat{\gamma}^{(\widehat{K}_\star)} \ps} \leq \frac{2}{ \widehat{\gamma}^{(\widehat{K}_\star)} \ps} \leq \frac{2}{1/4 + \eta} \leq 1 + \alpha.
\end{equation*}
On the other hand, from Lemma~\ref{lemma:pseudo-spectral-gap-skipped-correspondence}, for any $k \in \N$, $\gamma \ps^{(k)} \leq k \gamma \ps$,
and thus
\begin{equation*}
    \frac{\widehat{\gamma} \ps }{\gamma\ps} =
   \frac{\widehat{\gamma}^{(\widehat{K}_\star)} \ps}{\widehat{K}_\star \gamma\ps} \leq \frac{\widehat{\gamma}^{(\widehat{K}_\star)} \ps}{\gamma^{(\widehat{K}_\star)} \ps} = 1 + \frac{\widehat{\gamma}^{(\widehat{K}_\star)} \ps - \gamma^{(\widehat{K}_\star)} \ps}{\gamma^{(\widehat{K}_\star)} \ps} \stackrel{(a)}{\leq} 1 + \frac{\eta}{\gamma^{(\widehat{K}_\star)} \ps} \stackrel{(b)}{\leq} 1 + 4 \eta < 1 + \alpha,
\end{equation*}
where inequalities $(a), (b)$ follow from being on $G(\eta)$ and $(b)$ also requires the stopping condition.
The event inclusion follows,
\begin{equation*}
    \set{\max \set{ \frac{\widehat{\gamma} \ps  }{\gamma\ps}, \frac{\gamma\ps}{\widehat{\gamma} \ps } } > 1 + \alpha} \subset G(\eta)^\complement,
\end{equation*}
and to prove \eqref{eq:relative-confidence-stronger-definition}, we are left with bounding the probability of $G(\eta)^\complement$ occurring.
By the union bound and Theorem~\ref{theorem:pseudo-spectral-gap-estimation-absolute}, $\PR{G(\eta)^\complement} \leq \delta$.
Moreover, for any $a,b > 0$, 
$$\abs{a/b - 1} \leq \max \set{a/b, b/a} - 1,$$ 
thus
\begin{equation*}
    \PR{\abs{ \frac{\widehat{\gamma} \ps  }{\gamma\ps} - 1 } > \alpha} \leq \PR{\max \set{ \frac{\widehat{\gamma} \ps  }{\gamma\ps}, \frac{\gamma\ps}{\widehat{\gamma} \ps } } > 1 + \alpha} \leq \PR{G(\eta)^\complement}.
\end{equation*}
Setting $\eta = 1/8$ and $\alpha = 5$ also fixes $K = 16$ and finishes proving the theorem.

\end{proof}

\begin{remark}
The ``doubling trick" applied in \citet{levin2016estimating} 
exploited the identity
\begin{equation*}
    \gamma_\star(P^k) = 1 - (1 - \gamma_\star)^k,
\end{equation*}
to achieve arbitrary precision $\eps$, but only holds only under reversibility.
\citet{wolfer2022empirical} ran into the similar technical difficulty for estimating a ``generalized contraction coefficient", which they connected to its skipped counterpart
\citep[Proposition~2.1]{wolfer2022empirical}.
The tools therein 
are insufficient
for our purposes, and we use the apparatus in Section~\ref{section:properties-pseudo-spectral-gap} to connect $\gamma \ps$ and $\gamma^{(k)}\ps$, particularly the sub-multiplicativity property of Lemma~\ref{lemma:sub-multiplicativity-property}.
\end{remark}

While we 
presume
that for most applications, Theorem~\ref{theorem:pseudo-spectral-gap-estimation-relative} 
will suffice
for the
practitioner, it is theoretically interesting to fully understand to dependency in $\eps$ of the sample complexity.
Our second estimator computes $\gamma_\dagger(P^k)$ for $k$ up to some adaptive $\hat{K}$ and can achieve arbitrarily small error $\eps$, albeit with a trajectory length that is inversely proportional to the cube of $\gamma \ps$.

\begin{theorem}[Arbitrary multiplicative error]
\label{theorem:pseudo-spectral-gap-estimation-relative-arbitrary}
Let $\eps \in (0, 5)$ and $\delta \in (0,1)$.
For the estimator $\widehat{\gamma} \pshatK$ 
defined in
\eqref{eq:pseudo-spectral-gap-estimator-absolute}
with adaptive 
$$\widehat{K}(X_1, \dots, X_m) = \ceil*{(\Nmin/\eps)^{1/3}},$$
there exists a universal constant $c$ such that the following statement is true.
Let $X_1, \dots, X_m$ be a Markov chain over the state space $\calX$ with ergodic transition matrix $P$, minimum stationary probability $\pimin$, pseudo-spectral gap $\gamma \ps$, arbitrary initial distribution. When
\begin{equation*}
    m \geq \frac{c}{\pimin \gamma^3 \ps \eps^2} \log \frac{1}{\pimin} \log^2 \frac{1}{\pimin \gamma \ps \eps \delta},
\end{equation*}
it holds with probability at least $1 - \delta$ that 
\begin{equation*}
    \abs{\widehat{\gamma}\pshatK -\gamma \ps} \leq \eps \gamma \ps.
\end{equation*}
\end{theorem}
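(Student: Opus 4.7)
The plan is to re-run the reduction from the proof of Theorem~\ref{theorem:pseudo-spectral-gap-estimation-absolute}, but now with the data-dependent truncation $\widehat{K}$, exploiting the fact that concentration of $\Nmin$ around $(m-1)\pimin$ pins $\widehat{K}$ within a deterministic window. For any integer $K \geq 1$ that reduction already yields
\begin{equation*}
    \abs{\widehat{\gamma}\psK - \gamma\ps} \leq \frac{1}{K} + \max_{k \in [K]} \frac{2}{k}\nrm{\widehat{L}^{(k)} - L^k},
\end{equation*}
which I apply at $K = \widehat{K} = \ceil{(\Nmin/\eps)^{1/3}}$. A first invocation of Lemma~\ref{lemma:estimation-pimin-relative} with constant multiplicative tolerance and failure probability $\delta/2$ shows that, under the hypothesized lower bound on $m$, $\Nmin$ is within a constant factor of $(m-1)\pimin$, so $\widehat{K}$ is confined to a deterministic window $[K_-,K_+]$ with $K_\pm$ of order $(m\pimin/\eps)^{1/3}$. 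The constant in the sample-complexity bound is tuned so that $K_- \geq 2/(\eps\gamma\ps)$, yielding $1/\widehat{K} \leq \eps\gamma\ps/2$ and consuming half of the additive budget $\eps\gamma\ps$.

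For the stochastic term I apply Lemma~\ref{lemma:learn-laplacian-spectral-norm} to the $k$-skipped chain for each $k \in [K_+]$, at target accuracy $k\eps\gamma\ps/4$ and failure $\delta/(2K_+)$. The per-$k$ requirement on the effective length $m/k$ rearranges to $m \gtrsim 1/(k\gamma\ps^{(k)}\pimin\eps^2\gamma\ps^2)$ up to logs, so the main task is to lower-bound $k\gamma\ps^{(k)}$ uniformly over $k \in [K_+]$. For $k \leq 1/\gamma\ps$, I invoke Lemma~\ref{lemma:pseudo-spectral-gap-skipped-shim} (or Lemma~\ref{lemma:pseudo-spectral-gap-skipped-correspondence} when its hypothesis $k \leq 1/(\kps\gamma\ps)$ applies) to obtain $k\gamma\ps^{(k)} \gtrsim \gamma\ps/\log(1/\pimin)$; for $k \gtrsim 1/\gamma\ps$, Lemma~\ref{lemma:termination-inequality} gives $\gamma\ps^{(k)} \geq 1/2$, hence $k\gamma\ps^{(k)} \geq 1/(2\gamma\ps) \geq \gamma\ps$. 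Both regimes collapse to the uniform requirement $m \gtrsim \log(1/\pimin)/(\pimin\gamma\ps^3\eps^2)$ up to logarithmic factors, matching the theorem.

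The hard part is handling the data-dependence of $\widehat{K}$ cleanly. My strategy is to first condition on the $\Nmin$-concentration event, on which $\widehat{K}$ is confined to the deterministic window $[K_-,K_+]$; then I union-bound the spectral events $\nrm{\widehat{L}^{(k)} - L^k} \leq k\eps\gamma\ps/4$ over the $K_+$ deterministic indices. Each Laplacian event inflates the logarithmic factor in Lemma~\ref{lemma:learn-laplacian-spectral-norm} by a $\log K_+ \asymp \log(1/(\pimin\gamma\ps\eps))$ term, which together with the two internal logarithms of that lemma reproduces the $\log^2(1/(\pimin\gamma\ps\eps\delta))$ factor in the statement. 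A final union bound between the $\pimin$-estimation event and the $K_+$ Laplacian events at level $\delta/2$ each closes the argument.
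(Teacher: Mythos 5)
Your proposal is correct in substance and arrives at the right rate, but it organizes the argument around the random truncation differently from the paper, and the comparison is worth spelling out. The paper never sandwiches $\widehat{K}$ in a two-sided window: it introduces the \emph{deterministic} pivot $K_{\eps,\gamma\ps} = \ceil{3/(\eps\gamma\ps)}$ and splits
$\abs{\widehat{\gamma}\pshatK - \gamma\ps}$ into three terms via the triangle inequality through $\widehat{\gamma}\psKeps$ and $\gamma\psKeps$. The surplus indices $K_{\eps,\gamma\ps} < k \leq \widehat{K}$ are then discarded \emph{without any probabilistic control}, because each contributes at most $\frac{1}{k}\gamma_\star(\cdot) \leq 1/K_{\eps,\gamma\ps} \leq \eps\gamma\ps/3$ to the maximum; consequently only the one-sided event $\{\widehat{K} < K_{\eps,\gamma\ps}\}$ needs to be shown improbable (via Lemma~\ref{lemma:estimation-pimin-relative}, exactly as you do for your $K_-$), and the union bound for the Laplacian estimates runs over the fixed, $m$-independent set $[K_{\eps,\gamma\ps}]$. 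Your route instead union-bounds over $[K_+]$ with $K_+ \asymp (m\pimin/\eps)^{1/3}$, which buys nothing and costs you two small technical debts you should acknowledge: (a) for $k > 4/(\eps\gamma\ps)$ your target accuracy $k\eps\gamma\ps/4$ exceeds $1$, so Lemma~\ref{lemma:learn-laplacian-spectral-norm} (stated for accuracy in $(0,1)$) does not literally apply, and you must fall back on the deterministic bound $\frac{1}{k}\abs{\gamma_\star(\cdot)-\gamma_\star(\cdot)} \leq 1/k$ for those indices --- which is precisely the paper's mechanism; and (b) since $K_+$ grows with $m$, the $\log K_+$ inflation of the failure probability is self-referential in $m$ and needs the standard inversion $m/\log m \geq m_0/\log m_0$ to confirm the stated, $m$-free sample complexity. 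Your treatment of the uniform lower bound on $k\gamma\ps^{(k)}$ via Lemmata~\ref{lemma:pseudo-spectral-gap-skipped-shim} and~\ref{lemma:termination-inequality}, and the resulting $\gamma\ps^{-3}$ rate driven by the $k=1$ case, match the paper. Net assessment: a correct proof modulo the two completable points above, but the paper's one-sided pivot is the cleaner way to neutralize the data-dependence of $\widehat{K}$.
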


\begin{proof}

Let us set
$$\widehat{K}(X_1, \dots, X_m) = \ceil*{\left( \Nmin / \eps \right)^{1/3}},$$
i.e. the prefix $[K]$ of integers we explore will depend on the data.
We decompose as follows,
\begin{equation*}
    \begin{split}
        \abs{\widehat{\gamma}\pshatK - \gamma \ps} \leq \abs{\widehat{\gamma}\pshatK - \widehat{\gamma}\psKeps} + \abs{\widehat{\gamma}\psKeps - \gamma \psKeps} + \abs{\gamma \psKeps - \gamma \ps},
    \end{split}
\end{equation*}
where we wrote $K_{\eps, \gamma \ps} = \ceil{3/(\eps \gamma \ps)}$.
The third summand is immediately smaller than $\eps \gamma \ps / 3$.
The second term is handled similar to Theorem~\ref{theorem:pseudo-spectral-gap-estimation-absolute} at precision $\eps \gamma \ps /3$ and confidence $1 - \delta /2$.
On the event where $\widehat{K} \geq K_{\eps, \gamma \ps}$,
\begin{equation*}
    \abs{\widehat{\gamma}\pshatK - \widehat{\gamma} \psKeps} \leq \max_{K_{\eps, \gamma \ps < k \leq K}} \set{ \frac{\gamma \left(\left(\widehat{L}^{(k) }\right)\trn\widehat{L}^{(k) }\right)}{k}} \leq \frac{1}{K_{\eps, \gamma \ps}} \leq \frac{\eps \gamma \ps}{3},
\end{equation*}
and so it suffices to bound
\begin{equation*}
    \PR{\widehat{K} \leq K_{\eps, \gamma \ps}} \leq \PR{\widehat{K} \leq \frac{3}{\eps \gamma \ps}}
\end{equation*}
From the definition of $\widehat{\pi}_\star$ (Lemma~\ref{lemma:estimation-pimin-relative}) and when $m \geq \frac{54}{\pimin \gamma \ps^3 \eps^2}$,
\begin{equation*}
\begin{split}
    \PR{\widehat{K} \leq \frac{3}{\eps \gamma \ps}} &= \PR{ \ceil*{\left(\frac{\Nmin}{\eps}\right)^{1/3}} \leq \frac{3}{\eps \gamma \ps}} \leq \PR{ m \widehat{\pi}_\star \leq \frac{27}{\eps^2 \gamma^3 \ps}} \\
    &\leq \PR{ \frac{54}{\pimin \gamma \ps^3 \eps^2} \widehat{\pi}_\star \leq \frac{27}{\eps^2 \gamma^3 \ps}} = \PR{  \widehat{\pi}_\star \leq \frac{\pimin}{2}}\\
    &\leq  \PR{\abs{\widehat{\pi}_\star - \pi_\star} > \frac{\pimin}{2}}.
\end{split}
\end{equation*}
The claim then follows from Lemma~\ref{lemma:estimation-pimin-relative} at multiplicative error $\eps = 1/2$ and an application of the union bound.

\end{proof}

\begin{oproblem}
Close the dependency gap in $\eps$ and $\pimin$ between the minimax upper and lower bounds.
\end{oproblem}

\section{Empirical estimation of \texorpdfstring{$\gamma \ps$}{the pseudo-spectral gap}}
\label{section:empirical-estimation}

The previous section analyzed the estimation problem from a minimax perspective. While the obtained upper bounds
enable us to understand its statistical complexity, they also come with a serious caveat as they depend on the --a priori unknown-- $\gamma \ps$ and $\pimin$. In this section, we set out the task of constructing non-trivial empirical confidence intervals for $\gamma \ps$. Namely, for $\delta \in (0,1)$, we wish to find an interval $\widehat{I}(X_1, \dots, X_m) \subset (0,1]$ such that with probability at least $1 - \delta$, it holds that $\gamma \ps \in \widehat{I}$, and $|\widehat{I}| \stackrel{a.s.}{\rightarrow} 0$.

\subsection{Reversible dilation}
\label{section:reversible-dilation}
From a non-reversible transition matrix $P$ there are multiple ways of constructing a reversible one.

\subsubsection{Known reversiblizations}

We 
mentioned 
above
the multiplicative reversiblization of \citet{fill1991eigenvalue}.
Another common operation is the additive reversiblization of $P$, that corresponds to the m-projection \citep[Theorem~7]{wolfer2021information} of $P$ onto the manifold of reversible transition matrices \footnote{For simplicity, we assume for the expressions of $P_m$ and $P_e$ that $P$ is full support.},
$
    (P + P^\star)/2 = P_m \eqdef \argmin_{\bar{P} \in \calW \rev} D(P \| \bar{P}),
$
where $D$ is the Kullback-Leibler divergence between the two Markov processes governed by $P$ and $\bar{P}$, and $\calW \rev$ is the family of reversible Markov transition matrices.
For instance, the estimator for $\gamma_\star$ in \citet{hsu2019} is essentially taken to be the absolute spectral gap of the additive reversiblization of the tally matrix.
The dual of the reversible m-projection is the reversible e-projection, given by
$
    P_e \eqdef \argmin_{\bar{P} \in \calW \rev} D(\bar{P} \| P) = \diag (v) (P^\star \circ P) \diag(v)^{-1} /\rho,
$
where $\rho$ and $v$ are the Perron-Frobenius root and associated right eigenvector of $P^\star \circ P$. 
The reversiblizations $P_m$ and $P_e$ are known to satisfy Pythagorean inequalities with respect to the Kullback-Leibler divergence between the Markov processes  \citep[Theorem~7]{wolfer2021information}.
Finally, we mention the Metropolis-Hastings reversiblization proposed by \citet{choi2020metropolis}.
Additive and multiplicative reversiblizations have both proven to been instrumental for studying the mixing time of general ergodic chains; the former for continuous-time chains, and the latter in a discrete-time setting \citep{montenegro2006mathematical, fill1991eigenvalue}. The pseudo-spectral gap of \citet{paulin2015concentration} is a generalization of the multiplicative reversiblization of \citeauthor{fill1991eigenvalue}, such that a naive plug-in approach for it suggests multiplying empirical transition matrices with their respective time reversal --a computationally expensive operation that was implemented in \citet{pmlr-v99-wolfer19a}.

\subsubsection{New reversiblization: the reversible dilation} We now introduce the notion of the reversible dilation $\mathscr{S}(P)$ of a chain,
closely related to its multiplicative counterpart in that it shares strong spectral similarities 
(see Lemma~\ref{lemma:multiplicative-and-dilation-similar-spectrum}) whose generalized version 
still controls the mixing time \eqref{eq:dilated-pseudo-spectral-gap-controls-mixing-time} in a 
discrete-time setting, and that will allow us to substantially reduce the computational cost of our estimator. 
Indeed, when we know $\pi$, at the price of embedding the Markov operator in a space
of twice the dimension,
we avoid the need 
for matrix multiplication. The ideas in this section take inspiration from the concept of 
self-adjoint dilations borrowed from \citet{paulsen2002completely} that have also 
recently found their use in the field of matrix concentration \citep{tropp2012user}. 
Moreover, we argue in Lemma~\ref{lemma:pssg-reversible} that a pseudo-spectral gap 
whose definition is based on reversible dilations instead of multiplicative reversiblizations is a more
natural generalization of the absolute spectral gap of a reversible Markov operator.

\begin{definition}[Reversible dilation]
\label{definition:reversible-dilation}
Let $P$ be the transition matrix of a Markov chain over the state space $\calX$, with stationary distribution $\pi$. We define the reversible dilation of $P$ to be the square matrix of size $2 \abs{\calX}$,
$$\mathscr{S}_{\pi}(P) \eqdef \begin{pmatrix} 0 & P \\ P^\star & 0 \end{pmatrix},$$
where $P^\star$ is the adjoint of $P$ in $\ell^2(\pi)$.
\end{definition}
For convenience, we also extend the $\mathscr{S}$ operator to
all real matrices via
$\mathscr{S}(A) = \begin{psmallmatrix} 0 & A \\ A \trn & 0 \end{psmallmatrix}$, to denote
the self-adjoint dilation with respect to the usual inner product in $\R^\calX$.

\begin{proposition}
\label{proposition:properties-reversible-dilation}
Let $P$ be a stochastic matrix with stationary distribution $\pi$, then the reversible dilation $\mathscr{S}_{\pi}(P)$ in Definition~\ref{definition:reversible-dilation} verifies the following properties:
\begin{enumerate}[$(i)$]
	\item $\mathscr{S}_{\pi}(P)$ is a $(2\abs{\calX}) \times (2\abs{\calX})$ stochastic matrix.
	\item The concatenated and normalized vector $\frac{1}{2} \begin{pmatrix} \pi & \pi  \end{pmatrix}$ is a stationary distribution for $\mathscr{S}_{\pi}(P)$.
	\item $\mathscr{S}_{\pi}(P)$ is 2-periodic.
	\item $\mathscr{S}_{\pi}(P)$ is reversible with respect to $\frac{1}{2} \begin{pmatrix} \pi & \pi  \end{pmatrix}$.
\end{enumerate}
\end{proposition}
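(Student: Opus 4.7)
The plan is to verify the four properties one by one, each reducing to a short direct computation using the explicit formula $P^\star(x,x') = \pi(x')P(x',x)/\pi(x)$ for the adjoint.

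For \emph{(i)}, the size assertion is immediate from the block definition. For stochasticity, the top block row sums to $\sum_{x'} P(x,x') = 1$ by row-stochasticity of $P$, and entries are nonnegative. For the bottom block row I would compute $\sum_{x'} P^\star(x,x') = \sum_{x'} \pi(x')P(x',x)/\pi(x) = (\pi P)(x)/\pi(x) = 1$, using $\pi P = \pi$. All entries of $P^\star$ are nonnegative since $\pi > 0$ and $P \geq 0$.

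For \emph{(ii)}, treating $\mu = \tfrac{1}{2}(\pi,\pi)$ as a row vector, a block multiplication gives $\mu \,\mathscr{S}_{\pi}(P) = \tfrac{1}{2}(\pi P^\star, \pi P)$. The second coordinate equals $\tfrac{1}{2}\pi$ by assumption. For the first, I would compute $(\pi P^\star)(x') = \sum_x \pi(x) \cdot \pi(x')P(x',x)/\pi(x) = \pi(x')\sum_x P(x',x) = \pi(x')$. For \emph{(iii)}, squaring the block matrix yields $\mathscr{S}_{\pi}(P)^2 = \begin{psmallmatrix} P P^\star & 0 \\ 0 & P^\star P \end{psmallmatrix}$, which is block-diagonal, while $\mathscr{S}_{\pi}(P)$ itself is block anti-diagonal. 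Thus $\mathscr{S}_{\pi}(P)^n$ alternates between anti-diagonal (for odd $n$) and diagonal (for even $n$) block structures, so the chain returns to any state only at even times, giving period exactly $2$.

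For \emph{(iv)}, I would check the detailed balance equation $\mu(i)\mathscr{S}_{\pi}(P)(i,j) = \mu(j)\mathscr{S}_{\pi}(P)(j,i)$ for each of the four block cases. Within either diagonal block both sides vanish. For a cross term with $i$ in the first copy of $\calX$ and $j$ in the second, the identity reduces to $\tfrac{1}{2}\pi(i) P(i,j) = \tfrac{1}{2}\pi(j) P^\star(j,i)$, which is exactly the definition of the adjoint; the remaining cross case is symmetric. None of the four verifications presents a real obstacle; the only mild subtlety is keeping the block-matrix indexing consistent, which the adjoint identity resolves in one line.
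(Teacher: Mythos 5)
Your proof is correct and follows essentially the same route as the paper's: direct block computations using the explicit formula for $P^\star$, with (iii) obtained from the block-diagonal/anti-diagonal alternation of powers and (iv) from detailed balance (which the paper phrases equivalently as the matrix identity $\begin{psmallmatrix} \Dpi & 0 \\ 0 & \Dpi \end{psmallmatrix} \mathscr{S}_{\pi}(P) = \mathscr{S}_{\pi}(P) \trn \begin{psmallmatrix} \Dpi & 0 \\ 0 & \Dpi \end{psmallmatrix}$). You simply spell out the checks for (i) and (ii) that the paper leaves as "immediate."
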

\begin{proof}
Statements $(i)$ and $(ii)$ follow immediately from the properties of $P^\star$.
Notice that for any $k \in \N$,
\begin{equation*}
\begin{split}
\mathscr{S}_{\pi}(P)^{2k} &= \begin{pmatrix} (P P^\star)^k & 0 \\ 0 & (P^\star P )^k \end{pmatrix}, \\
\mathscr{S}_{\pi}(P)^{2k+1} &= \begin{pmatrix}  0 & (P P^\star)^k P  \\  (P^\star P )^k P^\star & 0 \end{pmatrix},
\end{split}
\end{equation*}
hence $(iii)$ holds.
To prove the reversibility property $(iv)$ it suffices to verify with a direct computation that 
$$\begin{psmallmatrix} \Dpi & 0 \\  0 & \Dpi \end{psmallmatrix} \mathscr{S}_{\pi}(P) = \mathscr{S}_{\pi}(P) \trn \begin{psmallmatrix} \Dpi & 0 \\  0 & \Dpi \end{psmallmatrix}.$$ 
\end{proof}

We henceforth use the shorthand $\gamma_\ddagger = \gamma_\star(\mathscr{S}(P))$.
Proposition~\ref{proposition:properties-reversible-dilation}-$(iii)$ implies that $\mathscr{S}_{\pi}(P)$ is never ergodic and that $-1$ is an eigenvalue of $\mathscr{S}_{\pi}(P)$.
In fact, it can even be that $P$ is irreducible, while $\mathscr{S}_{\pi}(P)$ is not, as illustrated below.

\begin{example}
We borrow the matrix
$P = \left(\begin{smallmatrix}0 & 1 & 0 \\ 0 & 0 & 1 \\ 1/2 & 0 & 1/2\end{smallmatrix}\right)$  over $\calX = \set{0,1,2}$ given at \citet[Example~6.2]{montenegro2006mathematical}.
 A direct computation yields
 \begin{equation*}
    \mathscr{S}_{\pi}(P) = \begin{pmatrix}
     & \vdots & & 0 & 1 & 0 \\
    \hdots & 0 & \hdots & 0 & 0 & 1 \\
     & \vdots & & 1/2 & 0 & 1/2 \\
     0 & 0 & 1 & & \vdots & \\
     1 & 0 & 0 & \hdots & 0 & \hdots \\
     0 & 1/2 & 1/2 & & \vdots & \\
     \end{pmatrix} \in \calW(\set{0,1,2,3,4,5}),
 \end{equation*}
 with a connection graph composed of two communicating classes $A = \set{0, 4}$ and $B = \set{1,2,3,5}$.
 For any $\theta \in [0, 1]$,
 \begin{equation*}
 \pi_\theta = \left( \frac{\theta}{2} , \frac{1 - \theta}{6}, \frac{1 - \theta}{3}, \frac{1 - \theta}{6}, \frac{\theta}{2}, \frac{1 - \theta}{3}\right)    
 \end{equation*}
 is a stationary distribution for $\mathscr{S}_{\pi}(P)$.
\end{example}

What is more, the spectral gaps of the multiplicative reversiblization and reversible dilation are closely related.

\begin{lemma}
\label{lemma:multiplicative-and-dilation-similar-spectrum}
\begin{equation*}
\gamma_\ddagger \stackrel{(a)}{\leq} \gamma_\dagger \stackrel{(b)}{=} \gamma_\ddagger(2 - \gamma_\ddagger) \stackrel{(c)}{\leq} 2\gamma_\ddagger.
\end{equation*}
\end{lemma}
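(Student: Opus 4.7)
The plan is to determine $\sigma(\mathscr{S}_\pi(P))$ explicitly in terms of $\sigma(P^\star P)$, after which the three claims become straightforward algebraic consequences of the definition of the absolute spectral gap. From the block identity already noted in the proof of Proposition~\ref{proposition:properties-reversible-dilation},
$$\mathscr{S}_\pi(P)^2 = \begin{pmatrix} P P^\star & 0 \\ 0 & P^\star P \end{pmatrix},$$
and because $PP^\star$ and $P^\star P$ share the same non-zero spectrum, every non-zero eigenvalue of $\mathscr{S}_\pi(P)^2$ sits in $\sigma(P^\star P)$. Since $\mathscr{S}_\pi(P)$ is reversible in $\ell_2\bigl(\tfrac12(\pi,\pi)\bigr)$ by Proposition~\ref{proposition:properties-reversible-dilation}-$(iv)$, its spectrum is real, so each non-zero eigenvalue of $\mathscr{S}_\pi(P)$ takes the form $\pm\sqrt{\mu}$ for some $\mu > 0$ in $\sigma(P^\star P)$. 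The converse --- that each such $\mu$ does lift to both $\pm\sqrt{\mu}$ --- follows from a direct eigenvector construction: given $v$ with $v P^\star P = \mu v$, the row vectors $(\pm v P^\star/\sqrt{\mu},\, v)$ are left eigenvectors of $\mathscr{S}_\pi(P)$ with eigenvalues $\pm\sqrt{\mu}$. Hence the non-zero part of $\sigma(\mathscr{S}_\pi(P))$ equals $\{\pm\sqrt{\mu} : \mu \in \sigma(P^\star P),\, \mu > 0\}$, with matched multiplicities on each side.

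Next I identify the eigenvalues of unit modulus. Since $P$ and $P^\star$ are both stochastic, $P^\star P$ is PSD in $\ell_2(\pi)$ with spectrum in $[0,1]$ and Perron eigenvector $\mathbf{1}$, so the only eigenvalue $\mu$ of $P^\star P$ producing $\abs{\sqrt{\mu}} = 1$ is $\mu = 1$. It follows that the only unit-modulus eigenvalues of $\mathscr{S}_\pi(P)$ are $\pm 1$, consistent with Proposition~\ref{proposition:properties-reversible-dilation}-$(iii)$. Setting $\mu_2 \eqdef \max\{\mu \in \sigma(P^\star P) : \mu < 1\} = 1 - \gamma_\dagger$, the definition of $\gamma_\star$ then gives
$$1 - \gamma_\ddagger \;=\; \sqrt{\mu_2} \;=\; \sqrt{1 - \gamma_\dagger},$$
and squaring yields identity $(b)$: $\gamma_\dagger = 1 - (1 - \gamma_\ddagger)^2 = \gamma_\ddagger(2 - \gamma_\ddagger)$. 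Bounds $(a)$ and $(c)$ then drop out at once from $\gamma_\ddagger \in [0,1]$, which forces $1 \le 2 - \gamma_\ddagger \le 2$.

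The only point requiring some care is eigenvalue bookkeeping at $0$ in the spectral correspondence: the kernels of $P$ and $P^\star$ may contribute asymmetric numbers of zero eigenvalues to $\mathscr{S}_\pi(P)$, so the full spectrum need not be strictly symmetric about $0$. This is harmless because $\gamma_\ddagger$ depends only on the largest-modulus non-unit eigenvalue and $0$ never competes. A minor notational wrinkle is that \eqref{definition:absolute-spectral-gap} is written with $\max\lambda$ rather than $\max\abs{\lambda}$; since the non-unit spectrum of $\mathscr{S}_\pi(P)$ is symmetric about $0$, the two conventions coincide on this matrix and the identity holds unambiguously.
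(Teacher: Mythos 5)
Your proof is correct and follows essentially the same route as the paper: square the dilation to obtain the block-diagonal matrix $\diag(PP^\star, P^\star P)$, deduce that the non-unit spectrum of $\mathscr{S}_\pi(P)$ consists of $\pm\sqrt{\mu}$ for $\mu\in\sigma(P^\star P)$, and conclude $1-\gamma_\ddagger=\sqrt{1-\gamma_\dagger}$, from which $(a)$--$(c)$ follow since $\gamma_\ddagger\in[0,1]$. The only cosmetic difference is that the paper first conjugates to the symmetric matrix $\mathscr{S}(L)$ and uses a characteristic-polynomial argument where you construct explicit eigenvectors; your remarks on the multiplicity at $0$ and the sign convention in \eqref{definition:absolute-spectral-gap} are careful additions, not corrections.
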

\begin{proof}
The proof is standard. Notice first that 
\begin{equation}
\label{equation:similarity-dilation-rescaled}
\begin{pmatrix} \Dpi^{-1/2} & 0 \\ 0 & \Dpi^{-1/2} \end{pmatrix} \begin{pmatrix} 0 & P \\ P^\star & 0 \end{pmatrix} \begin{pmatrix} \Dpi^{1/2} & 0 \\ 0 & \Dpi^{1/2} \end{pmatrix} = \begin{pmatrix} 0 & L \\ L \trn & 0 \end{pmatrix},
\end{equation}
i.e. $\mathscr{S}(L)$
and $\mathscr{S}_{\pi}(P)$ are similar matrices, thus have identical eigen-systems.
Also from a matrix similarity argument, $\gamma_\dagger(P) = \gamma(L \trn L)$. Denote by $\lambda_1 , \lambda_2,\dots, \lambda_{\abs{\calX}}$ the eigenvalues of $L \trn L$, and by $\mu_1, \mu_2, \dots, \mu_{2 \abs{\calX}}$ the eigenvalues of $\mathscr{S}(L)$.
Since $L \trn L$ is positive semi-definite hermitian,
the $\set{\lambda_x}_{x \in \calX}$ are all real and non-negative  (see e.g. \citet{fill1991eigenvalue}).
We can therefore order them as $1 = \lambda^\downarrow_1 \geq \lambda^\downarrow_2 \geq \dots \geq \lambda^\downarrow_{\abs{\calX}} \geq 0$. Since $\mathscr{S}_{\pi}(P)$ is a stochastic matrix, its eigenvalues $\mu_1, \dots, \mu_{2\abs{\calX}}$ can be chosen such that $\mu_1 = 1$ and $\abs{\mu_x} \leq 1, \forall x, 1 \leq x \leq 2 
\abs{\calX}$. It is a direct consequence of
\begin{equation*}
\mathscr{S}(L)^2 = \begin{pmatrix} L L \trn & 0 \\ 0 & L \trn L  \end{pmatrix}
\end{equation*}
that 
\begin{equation*}
\begin{split}
\det \left( \mathscr{S}(L)^2 - \lambda I_{2 \abs{\calX}} \right) = \det \left(  L L \trn - \lambda I_{\abs{\calX}} \right)^2,
\end{split}
\end{equation*}
thus the $\mu_x$ are exactly the $\pm \sqrt{\lambda_x}$. It follows that the $\mu_x$ are also real, and that
$$\gamma_\ddagger(P) = \gamma_\star(\mathscr{S}_{\pi}(P)) = \gamma(\mathscr{S}_{\pi}(P)) = 1 - \sqrt{\lambda^{\downarrow}_2},$$
which proves the equality at $(b)$.
Inequalities $(a)$ and $(c)$ follow from $\gamma_\ddagger(P) \in [0, 1]$.
\end{proof}

This leads to a reformulation of the pseudo-spectral gap, hence an equivalent way of bounding the mixing time of an ergodic chain from above and below. Indeed, defining
\begin{equation}
\label{definition:dilated-pseudo-spectral-gap}
\gamma \dps \eqdef \max_{k \in \N} \set{ \frac{1}{k} \gamma_\ddagger\left(P^k\right)},
\end{equation}
it holds that
\begin{equation}
\label{eq:gamma-dps-and-gamma-ps}
\gamma \dps \leq \gamma \ps = \max_{k \in \N} \set{ \frac{1}{k} \gamma_\ddagger\left(P^k\right)\left(2 - \gamma_\ddagger\left(P^k\right)\right)  }\leq 2 \gamma \dps.
\end{equation}
Combining
with \citet[Proposition~3.4]{paulin2015concentration},
we obtain
\begin{equation}
\label{eq:dilated-pseudo-spectral-gap-controls-mixing-time}
\frac{1}{4\gamma \dps} \leq \tmix \leq \frac{1}{\gamma \dps} \log \frac{4e}{\pimin}.
\end{equation}
\citet[Lemma~15]{pmlr-v99-wolfer19a} established that for irreducible
reversible Markov chains, the absolute spectral gap and the pseudo-spectral gap are within a multiplicative
factor of $2$,
$$
\gamma_\star \leq \gamma \ps = \gamma_\star(2 -  \gamma_\star) \leq 2\gamma_\star.$$
The below-stated Lemma~\ref{lemma:pssg-reversible} shows that
$\gamma \dps$ reduces more naturally to $\gamma_\star$ under reversibility.
\begin{lemma}
  \label{lemma:pssg-reversible}
  For ergodic and reversible
  $P$, it holds that
  $\gamma \dps = \gamma_\star$.
\end{lemma}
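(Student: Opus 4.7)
The plan is to show that $\gamma_\ddagger(Q) = \gamma_\star(Q)$ for every reversible ergodic $Q$, then to apply this with $Q = P^k$ together with the classical identity $\gamma_\star(P^k) = 1 - (1-\gamma_\star)^k$, and finally to maximise $(1 - (1-\gamma_\star)^k)/k$ over $k \in \N$ and identify $k=1$ as the maximiser.

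For the first step, if $Q$ is reversible then $Q^\star = Q$, so $\mathscr{S}_\pi(Q) = \begin{psmallmatrix} 0 & Q \\ Q & 0 \end{psmallmatrix}$. Re-using the eigenvalue bookkeeping from the proof of Lemma~\ref{lemma:multiplicative-and-dilation-similar-spectrum}, with $L$ symmetric (by reversibility) so that $\mathscr{S}(L)^2 = \diag(L^2, L^2)$, the eigenvalues of $\mathscr{S}_\pi(Q)$ are exactly $\pm|\lambda|$ as $\lambda$ ranges over $\sigma(Q)$. Primitivity of $Q$ rules out $-1 \in \sigma(Q)$ and forces $1 \in \sigma(Q)$ to be simple, so the only unit-modulus eigenvalues of $\mathscr{S}_\pi(Q)$ are $\pm 1$. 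Hence
\[\gamma_\ddagger(Q) = 1 - \max\set{|\lambda| : \lambda \in \sigma(Q),\ |\lambda| \neq 1} = \gamma_\star(Q).\]

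Since reversibility and primitivity are both preserved by taking powers, this applies to $Q = P^k$, giving $\gamma_\ddagger(P^k) = \gamma_\star(P^k)$. Under ergodicity, $\lambda^k \in \sigma(P^k)$ has unit modulus iff $\lambda = 1$, so $\gamma_\star(P^k) = 1 - (1-\gamma_\star)^k$. Setting $\alpha = 1 - \gamma_\star \in [0,1)$ and factoring $1 - \alpha^k = (1-\alpha)\sum_{j=0}^{k-1}\alpha^j \leq k(1-\alpha)$ (with equality iff $k=1$) then yields
\[\gamma\dps = \max_{k \in \N} \frac{1 - \alpha^k}{k} = 1 - \alpha = \gamma_\star,\]
attained at $k = 1$. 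The argument is essentially routine once Lemma~\ref{lemma:multiplicative-and-dilation-similar-spectrum} is in hand; the only subtle point is invoking aperiodicity to rule out $\lambda = -1 \in \sigma(P)$, so that every non-trivial eigenvalue of $P^k$ lies strictly inside the unit disk and the ``absolute'' spectral gap coincides with the naive spectral gap.
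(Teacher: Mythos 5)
Your proof is correct and follows essentially the same route as the paper: both reduce the spectrum of $\mathscr{S}_\pi(P^k)$ to $\pm\abs{\lambda_x}^k$ via the computation in Lemma~\ref{lemma:multiplicative-and-dilation-similar-spectrum}, identify $\gamma_\ddagger(P^k) = 1-(1-\gamma_\star)^k$ using primitivity to exclude spurious unit-modulus eigenvalues, and maximise $(1-\alpha^k)/k$ over $k$. Your write-up is marginally more explicit than the paper's on the last step, supplying the inequality $1-\alpha^k = (1-\alpha)\sum_{j=0}^{k-1}\alpha^j \leq k(1-\alpha)$ that the paper leaves implicit.
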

\begin{proof}
By reversibility,
  $P^\star = P$, $P$ has a real spectrum, and $\mathscr{S}_\pi(P)^2 = \begin{psmallmatrix}P^2 & 0 \\ 0 & P^2\end{psmallmatrix}$.
  We denote by $1 = \lambda^{\downarrow}_1 > \lambda^{\downarrow}_2 \geq \dots \geq \lambda^{\downarrow}_{\abs{\calX}}$ the eigenvalues of $P$. For any $x \in \calX$ and $k \geq 1$, $\lambda_x^{\downarrow 2k} \geq 0$ is an eigenvalue for $P^{2k}$.
  From the proof of Lemma~\ref{lemma:multiplicative-and-dilation-similar-spectrum}, it follows that the eigenvalues of $\mathscr{S}_\pi(P^k)$ are the $\pm \sqrt{ \lambda_x^{\downarrow 2k}} = \pm \abs{\lambda_x}^k$.
  Furthermore $\lambda_\star^{k}$ with $\lambda_\star = \max \set{\lambda^{\downarrow}_2, \abs{\lambda^{\downarrow}_{\abs{\calX}}}}$ is necessarily the second largest. As a result,
\begin{equation*}
\gamma \dps = \max_{k \in \N} \left\{ \frac{1 - \lambda_\star^{k}}{k} \right\} = \gamma_\star.
\end{equation*}
\end{proof}

\subsection{Improved empirical estimator and confidence intervals}
\label{section:empirical-confidence-intervals}
We estimate the quantity $\gamma \dps$ as a proxy for $\gamma \ps$.
Namely, for fixed $K \in \N$, we define
\begin{equation*}
\begin{split}
\label{equation:definition-overall-plug-in-estimator}
\widehat{\gamma} \dpsK \eqdef \max_{k \in [K]} \set{ \frac{1}{k} \gamma_\ddagger\left( \widehat{P}^{(k, \alpha)} \right) },
\end{split}
\end{equation*}
where the $\alpha$-smoothed tally variables, with $\alpha >0$, are defined by
\begin{equation*}
\begin{split}
\widehat{P}^{(k, \alpha)} \eqdef \sum_{x,x' \in \calX} \frac{N_{x x'}^{(k)} + \alpha}{N_x^{(k)} + \abs{\calX} \alpha} e_x \trn e_{x'}, \qquad \widehat{\pi}^{(k, \alpha)} \eqdef \sum_{x \in \calX} \frac{N_{x}^{(k)} + \abs{\calX} \alpha}{\floor{(m - 1) / k} + \abs{\calX}^2 \alpha} e_x, \\
\widehat{D}_\pi^{(k, \alpha)} \eqdef \diag\left( \widehat{\pi}^{(k, \alpha)} \right), \qquad \widehat{L}^{(k, \alpha)} \eqdef \left( \widehat{D}_\pi^{(k, \alpha)} \right)^{1/2}  \widehat{P}^{(k, \alpha)} \left( \widehat{D}_\pi^{(k, \alpha)} \right)^{-1/2},
\end{split}
\end{equation*}
and where $N_x^{(k)}, N_{x x'}^{(k)}$ are introduced in \eqref{equation:definition-natural-counting-random-variables}. Notice that $\widehat{\pi}^{(k, \alpha)}$ requires more aggressive smoothing than for the transition matrix in order to ensure stationarity, and has an easily computable form, which is an improvement over \citet{hsu2019}, where the empirical stationary distribution is computed by solving a linear system. 

\begin{remark}[Computational complexity.]
Computing the estimator $\widehat{\gamma} \dpsK$ can be achieved in $\tilde \bigO(K(m + \abs{\calX}^2))$ operations (see discussion in Section~\ref{section:computational-complexity}).
\end{remark}

We make a data-driven choice for $K = \widehat{K}(X_1, \dots, X_m)$ and adapt the proof of \citet[Theorem~8]{pmlr-v99-wolfer19a} to recover fully empirical confidence intervals for $\gamma \dps$ instead of $\gamma \ps$, whose
non-asymptotic form is described in the next theorem.

\begin{theorem}
  \label{theorem:confidence-intervals}
Let $c \leq 48$ be a universal constant.
With probability at least $1 - \Theta(\delta)$ it holds that
\begin{equation*}
\begin{split}
\abs{\widehat{\gamma} \dpshatK - \gamma \dps} &\leq \frac{1}{\widehat{K}} + \max_{k \in [\widehat{K}]} \set{\frac{1}{k} \left( \widehat{V}_{\widehat{\delta} }^{(k, \alpha)}  + \widehat{U}_{\widehat{\delta}}^{(k, \alpha)}\left(2 + \widehat{U}_{\widehat{\delta}}^{(k, \alpha)} \right) \right)},  \\
\end{split}
\end{equation*}
where
\begin{equation}
\begin{split}
\label{eq:adaptive-prefix-empirical}
    \widehat{\delta} =  \sqrt{\frac{\log^3 m}{m }}\frac{\delta}{\widehat{K} \abs{\calX}}, \qquad 
    \widehat{K} = \ceil*{\frac{\Nmin^{3/2}}{m \log^{3/2} m}}, \\
\end{split}
\end{equation}
and
\begin{equation*}
\begin{split}
    \widehat{T}_\delta^{(k, \alpha)} &= \cfrac{c}{\gamma \ps\left(\widehat{P}^{(k, \alpha)}\right)} \log \left( 2 \sqrt{\cfrac{2 (\floor{(m-1) / k} + \alpha \abs{\calX}^2 )}{\Nmin^{(k)} + \alpha \abs{\calX}  }} \right) \widehat{W}_\delta^{(k, \alpha)} \\
    \widehat{U}_\delta^{(k, \alpha)} &= \frac{1}{2} \max \bigcup_{x \in \calX} \set{\cfrac{\widehat{T}_\delta^{(k,\alpha)}}{\cfrac{N_x^{(k)} + \alpha\abs{\calX} }{\floor{(m-1)/k} + \alpha \abs{\calX}^2 }}, \cfrac{\widehat{T}_\delta^{(k,\alpha)}}{\left[\cfrac{N_x^{(k)} + \alpha \abs{\calX} }{\floor{(m-1)/k} + \alpha \abs{\calX}^2 } - \widehat{T}_\delta^{(k,\alpha)}\right]_+}} \\
		\widehat{V}_\delta^{(k,\alpha)} &= \sqrt{\abs{\calX}}  \cfrac{\Nmax^{(k)} + \alpha \abs{\calX} }{\Nmin^{(k)} + \alpha \abs{\calX} } \widehat{W}_\delta^{(k, \alpha)}  \\
		\widehat{W}_\delta^{(k,\alpha)} &= 2 \max_{x \in \calX} \set{ \cfrac{  \sum_{x' \in \calX} \sqrt{N^{(k)}_{xx'}} + 3 \sqrt{N^{(k)}_x / 2} \log^{1/2} {(2 \floor{(m-1)/k} \abs{\calX} /\delta)}  + \alpha \abs{\calX} }{N^{(k)}_x + \alpha \abs{\calX} }}. \\
\end{split}
\end{equation*}
\end{theorem}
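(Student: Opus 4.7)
The plan is to follow the template of Theorems \ref{theorem:pseudo-spectral-gap-estimation-absolute} and \ref{theorem:pseudo-spectral-gap-estimation-relative-arbitrary}, but targeting $\gamma\dps$ in lieu of $\gamma\ps$ and carrying every population quantity through to an empirical analogue. First, I would write
\begin{equation*}
\abs{\widehat{\gamma}\dpshatK - \gamma\dps} \leq \underbrace{\abs{\widehat{\gamma}\dpshatK - \gamma\dpshatK}}_{\text{statistical}} + \underbrace{\abs{\gamma\dpshatK - \gamma\dps}}_{\text{truncation}},
\end{equation*}
and bound the truncation piece by $1/\widehat{K}$ via the fact that each $\gamma_\ddagger(P^k)/k$ is at most $1/k$, so truncating the max at $\widehat{K}$ costs at most $1/\widehat{K}$. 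The statistical piece is then absorbed into $\max_{k \in [\widehat{K}]} \tfrac{1}{k}\abs{\gamma_\ddagger(\widehat{P}^{(k,\alpha)}) - \gamma_\ddagger(P^k)}$ by sub-additivity of $\|\cdot\|_\infty$ over a finite index set.

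Next, for each fixed $k$ I would exploit that the reversible dilation renders $\mathscr{S}(\widehat{L}^{(k,\alpha)})$ and $\mathscr{S}(L^k)$ \emph{symmetric} matrices over the usual inner product (by the similarity in \eqref{equation:similarity-dilation-rescaled}), so that Weyl's inequality yields
\begin{equation*}
\abs{\gamma_\ddagger(\widehat{P}^{(k,\alpha)}) - \gamma_\ddagger(P^k)} \leq 2 \nrm{\mathscr{S}(\widehat{L}^{(k,\alpha)}) - \mathscr{S}(L^k)} = 2 \nrm{\widehat{L}^{(k,\alpha)} - L^k}.
\end{equation*}
This is precisely where the dilation pays off relative to the multiplicative reversiblization: we never multiply two perturbed matrices. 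I would then split
\begin{equation*}
\widehat{L}^{(k,\alpha)} - L^k = \bigl((\widehat{D}_\pi^{(k,\alpha)})^{1/2} - D_\pi^{1/2}\bigr) P^k D_\pi^{-1/2} + (\widehat{D}_\pi^{(k,\alpha)})^{1/2} \bigl(\widehat{P}^{(k,\alpha)} - P^k\bigr) D_\pi^{-1/2} + \text{cross terms},
\end{equation*}
attributing the $\widehat{U}$-type contribution to relative errors in $\sqrt{\widehat{\pi}^{(k,\alpha)}}/\sqrt{\pi}$ (and its inverse), and the $\widehat{W}$-type contribution to the row-wise concentration of the smoothed empirical transition kernel around $P^k$ on the $k$-skipped chain.

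For the row-wise concentration, I would apply the Markov-chain vector Bernstein / martingale tail bound already used in \citet{pmlr-v99-wolfer19a}, conditionally on the visit counts $N_x^{(k)}$, which yields a row-wise bound of the form $\sum_{x'} \sqrt{N_{xx'}^{(k)}} + O(\sqrt{N_x^{(k)} \log(1/\delta)})$ inside each state, exactly producing $\widehat{W}_\delta^{(k,\alpha)}$. The $\widehat{V}$ bound is then obtained by converting this row-wise $\ell_1$-type control to a spectral-norm control via $\|A\| \leq \sqrt{|\calX|}\|A\|_{\max\text{-row}}$ rescaled by $\sqrt{\widehat{\pi}^{(k,\alpha)}_{\max}/\widehat{\pi}^{(k,\alpha)}_{\min}}$, and the $\widehat{U}$ bound from a Freedman-type concentration for the stationary distribution (where $\widehat{T}_\delta^{(k,\alpha)}$ is the per-coordinate deviation and the $[\cdot]_+$ handles the transition from population to empirical denominators).

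The main obstacle is making the whole inequality \emph{fully empirical and adaptive}. Two difficulties arise: (i) $\widehat{K}$ depends on the data through $\Nmin$, so a union bound over $k \in [\widehat{K}]$ cannot use a deterministic cardinality; I would resolve this by instantiating each per-$k$ bound at confidence $\widehat{\delta}$ --- itself data-dependent through $\widehat{K}$ --- and verifying via a peeling / worst-case argument over the logarithmically many dyadic scales of $\Nmin$ that the total deviation probability is $\Theta(\delta)$. (ii) The concentration bounds naturally involve the \emph{population} $\gamma\ps(P^k)$ and $\pimin$, whereas the statement features $\gamma\ps(\widehat{P}^{(k,\alpha)})$ and the empirical counts; I would replace population by empirical via a self-bounding step, showing on a high-probability event that $\gamma\ps(\widehat{P}^{(k,\alpha)})$ and $\widehat{\pi}_\star$ are within a constant factor of their population versions (using the multiplicative estimation guarantee of Theorem~\ref{theorem:pseudo-spectral-gap-estimation-relative} and Lemma~\ref{lemma:estimation-pimin-relative}), at the price of the universal constant $c \leq 48$ absorbing these factors, and then a final union bound over these events.
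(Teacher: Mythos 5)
Your outline follows the paper's own route essentially step for step: the $1/\widehat{K}$ truncation, Weyl's inequality applied to the symmetric dilations $\mathscr{S}(\widehat{L}^{(k,\alpha)})$ and $\mathscr{S}(L^k)$ (with the block-diagonality of $\mathscr{S}(A)\trn\mathscr{S}(A)$ giving $\nrm{\mathscr{S}(\widehat{L}^{(k,\alpha)})-\mathscr{S}(L^k)}=\nrm{\widehat{L}^{(k,\alpha)}-L^k}$), the decomposition into a transition-matrix error $\calE_P$ and stationary-distribution errors $\calE_\pi$ yielding $\widehat{V}+\widehat{U}(2+\widehat{U})$, and a union bound over the data-dependent prefix using the deterministic bound $\widehat{K}\leq\sqrt{m/\log^3 m}+1$. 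Three concrete points deserve attention. First, your Weyl step carries a spurious factor of $2$: since you compare second eigenvalues of the two symmetric dilations directly (rather than of products $\widehat{L}\trn\widehat{L}$ versus $L\trn L$ as in Theorem~\ref{theorem:pseudo-spectral-gap-estimation-absolute}), the constant is $1$, and keeping the $2$ would make your final bound weaker than the one stated. Second, and more substantively, the row-wise bound $\widehat{W}_\delta^{(k,\alpha)}$ with its leading term $\sum_{x'}\sqrt{N^{(k)}_{xx'}}$ is \emph{not} what the martingale/matrix-Freedman machinery of \citet{pmlr-v99-wolfer19a} produces, even conditionally on visit counts; it is the signature of the i.i.d.\ total-variation learning bound of \citet[Theorem~2.1]{cohen2020learning}, applied after reducing the transitions out of each state to an i.i.d.\ sample via the simulation argument of \citet[p.19]{billingsley1961statistical}. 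Replacing Freedman by this black box is precisely the improvement over the earlier intervals, so invoking the old tool would not yield the stated $\widehat{W}$. Third, your plan to certify $\gamma\ps(\widehat{P}^{(k,\alpha)})\asymp\gamma\ps(P^k)$ by invoking the minimax Theorem~\ref{theorem:pseudo-spectral-gap-estimation-relative} is delicate: that theorem's guarantee is conditional on a sample-size threshold depending on the unknown $\pimin$ and $\gamma\ps$, whereas an empirical confidence interval must be valid for every $m$; the resolution (as in the prior work the paper defers to) is that the interval is allowed to be vacuous for small $m$, which is exactly what the $[\cdot]_+$ in $\widehat{U}$ encodes, rather than a constant-factor self-bounding event.
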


\begin{proof}
We first prove the theorem for a fixed $K \in \N$.
\begin{equation*}
\begin{split}
\abs{\widehat{\gamma} \dpsK - \gamma \dps} &\leq \frac{1}{K} + \abs{\widehat{\gamma} \dpsK - \gamma \dpsK} \\
&\leq \frac{1}{K} + \max_{k \in [K]} \set{  \frac{1}{k} \abs{ \gamma_\star \left( \mathscr{S}\left(\widehat{L}^{(k)}\right)\right) - \gamma_\star \left( \mathscr{S}\left(L^{k}\right)\right) } } \\
&\leq \frac{1}{K} + \max_{k \in [K]} \set{  \frac{1}{k}  \nrm{  \mathscr{S}\left(\widehat{L}^{(k)}\right) - \mathscr{S}\left(L^{k}\right)}}\\
&\stackrel{(\star)}{=} \frac{1}{K} + \max_{k \in [K]} \set{  \frac{1}{k}  \nrm{ \widehat{L}^{(k)} - L^{k}}},
\end{split}
\end{equation*}
where for $(\star)$, it suffices to notice that for any real matrix $A$, $\mathscr{S}(A) \trn \mathscr{S}(A)$
is block diagonal.
The next step is similar to the proof of  \citet{pmlr-v99-wolfer19a}, where we further bound the right-hand side using
\begin{equation*}
    \nrm{ \widehat{L}^{(k)} - L^{k}} \leq \nrm{\calE_P^{(k, \alpha)}} + \nrm{\calE_\pi^{(k, \alpha)}}\left(2 + \nrm{\calE_\pi^{(k, \alpha)}}\right),
\end{equation*}
where
\begin{equation*}
    \nrm{\calE_\pi^{(k, \alpha)}} = \max \set{ \nrm{\calE_{\pi,1}^{(k, \alpha)}}, \nrm{\calE_{\pi,2}^{(k, \alpha)}} },
\end{equation*}
with
\begin{equation*}
    \begin{split}
    \calE_P^{(k, \alpha)} &= \left(\widehat{D}_\pi^{(k, \alpha)}\right)^{1/2}\left( \widehat{P}^{(k, \alpha)} - P^k \right) \left(D_\pi^{(k, \alpha)}\right)^{-1/2}, \\
        \calE_{\pi,1}^{(k, \alpha)} &= \left(\widehat{D}_\pi^{(k, \alpha)}\right)^{1/2} \left(D_\pi^{(k, \alpha)}\right)^{-1/2} - I, \\
        \calE_{\pi,2}^{(k, \alpha)} &= \left(D_\pi^{(k, \alpha)}\right)^{1/2} \left(\widehat{D}_\pi^{(k, \alpha)}\right)^{-1/2} - I. \\
    \end{split}
\end{equation*}
By sub-multiplicativity of the spectral norm and H\"older's inequality,
\begin{equation*}
    \nrm{\calE_P^{(k, \alpha)}} \leq \sqrt{\abs{\cal{X}} } \frac{\Nmax^{(k)} + \alpha \abs{\calX} }{\Nmin^{(k)}  + \alpha \abs{\calX} } \nrm{\widehat{P}^{(k)} - P^k}_\infty.
\end{equation*}
Instead of relying on a matrix Freedman inequality as in \citet{pmlr-v99-wolfer19a}, 
we use a technique developed in \citet{wolfer2022empirical}.
The method applies as a black-box 
the distribution learning result of \citet[Theorem~2.1]{cohen2020learning}, which states that
for an independent vector $X_1, \dots, X_n$ sampled from an unknown distribution $\mu \in \calP(\calX)$, it holds that
with probability at least $1 - \delta$,
\begin{equation*}
    \tv{\mu - \widehat{\mu}(X_1, \dots, X_n)} \leq \frac{1}{\sqrt{n}} \sum_{x \in \calX} \sqrt{\widehat{\mu}(x)} + 3 \sqrt{\frac{\log 2/\delta}{2n}},
\end{equation*}
where $\widehat{\mu}$ is the empirical distribution computed from the sample.
This enables us to establish an error bound for learning the conditional distribution defined by each state.
Indeed, an independent sample for the conditional distribution is obtained by the simulation argument of \citet[p.19]{billingsley1961statistical}.
We obtain \citep[Theorem~3.1]{wolfer2022empirical} that with probability at least $1 - \delta$,
\begin{equation*}
\begin{split}
\nrm{\widehat{P}^{(k)} - P^k}_\infty &\leq \widehat{W}_{\delta}^{(k, \alpha)},\\
\end{split}
\end{equation*}
with
\begin{equation*}
\begin{split}
\widehat{W}_{\delta}^{(k, \alpha)} &= 2 \max_{x \in \calX} \set{ \frac{  \sum_{x' \in \calX} \sqrt{N^{(k)}_{xx'}} + (3/\sqrt{2}) \sqrt{N^{(k)}_x} \sqrt{\log {(2 \floor{(m-1)/k} \abs{\calX} /\delta)}}  + \alpha \abs{\calX} }{N^{(k)}_x + \alpha \abs{\calX} }}.
\end{split}
\end{equation*}
The term $\|\calE_\pi^{(k, \alpha)}\|$ is controlled by $\widehat{V}_\delta^{(k, \alpha)}$ similar to \citet{pmlr-v99-wolfer19a}.
A union bound completes the proof for adaptive $\widehat{K}$ by noting that $\widehat{K} \in \N$ and $\widehat{K} \leq \sqrt{m / \log^3 m} + 1$.
\end{proof}

\begin{remark}
The algorithm will only start to consider spectral gaps of $k$-skipped chains with $k > 1$ once the least visited state was seen a significant amount of times.
\end{remark}

\begin{remark}
Note that unlike the estimator, computing the confidence intervals still requires matrix multiplication.
\end{remark}

\subsubsection{Asymptotic behavior}
\citet[Theorem~8]{pmlr-v99-wolfer19a} involved looking at skipped chains with different offsets. In Theorem~\ref{theorem:confidence-intervals} however, we only consider a null offset as the confidence intervals follow the same asymptotic behavior, at reduced computational cost.
Let use denote 
$$\Gamma(P) = \max_{x \in \calX} \set{ \frac{\nrm{e_x P}_{1/2}}{\pi(x)}},$$
which depends on $P$ in a  substantially finer way than its worst-case upper bound $\frac{\abs{\calX}}{\pimin}$.
From \citet[Lemma~3.1]{wolfer2022empirical}, the asymptotic behavior of $\widehat{W}_\delta$ is roughly given by
\begin{equation*}
    \widehat{W}_\delta^{(1, \alpha)} \asymp \sqrt{\frac{\log (m \abs{\calX} / \delta)}{m}}\frac{1}{\sqrt{\pimin}} + \sqrt{\frac{\Gamma(P)}{m}}.
\end{equation*}
We observe that unlike intervals in \citet{pmlr-v99-wolfer19a}, the second dominant term in the empirical confidence bound can then largely benefit from sparsity properties of $P$.
Only accounting for the dominant terms,
\begin{equation*}
\begin{split}
\widehat{W}_\delta^{(k, \alpha)} &\asymp \sqrt{ \frac{\log m}{m}} \sqrt{\frac{k}{\pimin}}, \\  
\widehat{V}_\delta^{(k, \alpha)} &\asymp \sqrt{ \frac{\log m}{m}} \frac{\sqrt{k \abs{\calX}}}{\pimin^{3/2}}, \\ 
\widehat{T}_\delta^{(k, \alpha)} &\asymp \sqrt{ \frac{\log^3 m}{m}}\frac{\log 1 / \pimin}{\gamma \ps \sqrt{\pimin} \sqrt{k}}, \\
\widehat{U}_\delta^{(k, \alpha)} &\asymp \sqrt{ \frac{\log^3 m}{m}}\frac{\log 1 / \pimin}{\gamma \ps \pimin^{3/2} \sqrt{k}}, \\
\end{split}
\end{equation*}
thus for fixed $K \in \N$,
\begin{equation*}
\begin{split}
    \abs{\widehat{\gamma} \dpsK - \gamma \dps} - \frac{1}{K}
    &\asymp \sqrt{ \frac{\log^3 m}{m}}\frac{\log 1 / \pimin}{\gamma \ps \pimin^{3/2}}.
\end{split}
\end{equation*}

\section{Algorithm}
\label{section:algorithm}
The implementation details of the procedure of Section~\ref{section:empirical-estimation} are described in this section. 

\subsection{Pseudo-code}

For clarity, the computation of the confidence intervals is not made explicit in the 
pseudo-code and the reader is referred to Theorem~\ref{theorem:confidence-intervals} 
for their expression. The estimator is based on an approximate plug-in approach of 
$\gamma \dps$ \eqref{definition:dilated-pseudo-spectral-gap}. 
Namely, we compute an approximation of the pseudo-spectral gap define over 
a data-driven prefix $[\widehat{K}] \subsetneq \N$
with respect to reversible dilations of powers of the chain, which are each estimated with the natural counts based on observed 
skipped chains. We additionally introduce a smoothing parameter $\alpha$, 
which for simplicity is kept
fixed
for each power, and whose purpose is to keep 
the confidence intervals and estimator properly defined even in degenerate cases. 
The sub-procedure that computes the spectral gap of the reversiblizations of a chain
invokes
the Lanczos method by computing the first few largest eigenvalues in magnitude,
and is discussed in more details at Remark~\ref{remark:why-lanczos}. 

\begin{algorithm}[H]
 \SetAlgoFuncName{PseudoSpectralGapDilEstimator}{pssg}
 \SetKwData{CI}{$CI$}
 \SetKwData{N}{$\mathbf{N}$}
 \SetKwData{estimator}{$g_\star$}
 \SetKwFunction{SpectralGapRevDil}{SpectralGapRevDil}
 \SetKwProg{Fn}{Function}{:}{}
 \SetKwFunction{FPSSG}{PseudoSpectralGapDil}
	
	\SetKwFunction{DFPSSG}{AdaptivePseudoSpectralGapDil}
	\SetKwData{NN}{$\mathbf{N}$}
	\Fn{\DFPSSG{$\abs{\calX}$, $\alpha$, $(X_1, \dots, X_m)$}}{
	    $\NN \leftarrow \left[ 0 \right]_{\abs{\calX}}$
	    
	    \For{$t \leftarrow 1$ \KwTo $n - 1$}{
						$\NN[X_t] \leftarrow \NN[X_t] + 1$ \\
				}
				
	    $\Nmin = \min \set{ \NN }$
	    
	    $K \leftarrow \ceil*{\Nmin^{3/2}/(m \log^{3/2} m)}$ \\
		\KwRet \FPSSG{$\abs{\calX}$, $\alpha$, $(X_1, \dots, X_m)$, K}
	}
	
	\Fn{\FPSSG{$\abs{\calX}$, $\alpha$, $(X_1, \dots, X_m)$, $K$}}{
		$\estimator \leftarrow 0$ \\
		\For{$k \leftarrow 1$ \KwTo $K$}{
			$g \leftarrow \SpectralGapRevDil(\abs{\calX}, \alpha, (X_{1}, X_{1 + k}, X_{1 + 2k}, \dots, X_{1 + \floor{(m-1)/k}k}))$ \\
			\If{$g / k > \estimator $}{
					$\estimator \leftarrow g / k$
			}
	 }
	 \KwRet \estimator
	}
	
	\SetKwFunction{FSGMR}{SpectralGapRevDil}
	\Fn{\FSGMR{$\abs{\calX}, \alpha, (X_1, \dots, X_n)$}}{
				 \SetKwData{N}{$\mathbf{N}$}
				 \SetKwData{estimator}{$\widehat{\sg}^\mathscr{S}_{k, \alpha}$}
				 \SetKwData{Q}{$\mathbf{Q}$}
				 \SetKwData{NT}{$\mathbf{T}$}
				 \SetKwData{NN}{$\mathbf{N}$}
				 \SetKwData{NS}{$\mathbf{S}$}
				 \SetKwData{S}{$\mathbf{S}$}
				 \SetKwData{L}{$\mathbf{L}$}
                  \SetKwData{I}{$\mathbf{I}$}
				 \SetKwData{B}{$\mathbf{B}$}
				 \SetKwData{D}{$\mathbf{D}$}
				 \SetKwData{PI}{$\mathbf{\Pi}$}
				 \SetKwData{zero}{$\mathbf{0}$}
				 \SetKwFunction{LanczosSecondEigenvalue}{LanczosSecondEigenvalue}
				 \SetKwFunction{Dilate}{Dilate}
				 \SetKwFunction{Diag}{Diag}
				 \SetKwFunction{Concatenate}{Concatenate}
        $\NN \leftarrow \left[ \abs{\calX} \alpha \right]_{\abs{\calX}}$ \\
				$\NT \leftarrow \left[ \alpha \right]_{\abs{\calX} \times \abs{\calX}}$ \\
				\For{$t \leftarrow 1$ \KwTo $n - 1$}{
						$\NN[X_t] \leftarrow \NN[X_t] + 1$ \\
						$\NT[X_t, X_{t+1}] \leftarrow \NT[X_t, X_{t+1}] + 1$ \\
				}
				$\D \leftarrow \Diag(\NN)^{-1/2}$ \\
				$\L \leftarrow \D \NT \D$ \\
				$\S \leftarrow \begin{bmatrix} \zero & \L \\ \L \trn & \zero \end{bmatrix}_{2\abs{\calX} \times 2\abs{\calX}} $ \\
				\KwRet $2 - \LanczosSecondEigenvalue(\S + \I)$
  }
	
\caption{The estimation procedure outputting $\widehat{\gamma} \dpshatK$}
\label{algorithm:pseudo-spectral-gap-estimator}
\end{algorithm}

\begin{remark}
\label{remark:why-lanczos}
From \eqref{equation:similarity-dilation-rescaled}, for any $k \in \N$, $\mathscr{S}(L^k)$ and $\mathscr{S}_{\pi}(P^k)$ are similar matrices, and we can immediately rewrite
$$\gamma \dps = \max_{k \in \N} \set{ \frac{1}{k} \gamma \left( \mathscr{S}(L^k) \right)}.$$
The resulting symmetry of the collection of matrices, as well as the requirement for computing large eigenvalues in magnitude, naturally invites Lanczos-type algorithms.
In fact, from the prior knowledge of the two leading eigenvalues 
($\pm 1$) 
we can further simplify the computation by analytical considerations, and rewrite
$$\gamma \dps = \max_{k \in \N} \set{ \frac{1}{k} \rho\left( \mathscr{S}\left(L^k\right) - \mathscr{S}\left(\sqrt{\pi} \trn \sqrt{\pi}\right) \right) }. $$
\end{remark}

\subsection{Note on the computational complexity}
\label{section:computational-complexity}

Using $K$ dilations, since we have a closed form for the empirical stationary distribution, we manage to keep a quadratic time complexity of $\bigO \left( K \left(m + \abs{\calX}^2  + \mathcal{C}_{\lambda_\star} \right) \right)$,
 where $\mathcal{C}_{\lambda_\star}$ is the complexity of computing the second largest-magnitude eigenvalue of a symmetric matrix.
For this task, we consider
here the Lanczos algorithm.
Let $\lambda_1, \ldots, \lambda_{\abs{\calX}}$ be the eigenvalues of a symmetric real matrix ordered by magnitude,
and
denote by
$\tilde{\lambda}_1$
the algorithm's approximation for $\lambda_1$.
Then,
for a
stochastic
matrix, it is known \citep{kaniel1966estimates, paige1971computation, saad1980rates}
that
\begin{equation*}
\begin{split}
\abs{\lambda_1 - \tilde{\lambda}_1} \leq c R^{-2(n-1)},
\end{split}
\end{equation*}
where $c$ a universal constant,
$n$ is the number of iterations
(in practice often $n \ll \abs{\calX}$), and
$R = 1 + 2 r + 2 \sqrt{r^2 + r}$,
with $r = \frac{\lambda_1 - \lambda_2}{\lambda_2 - \lambda_{\abs{\calX}}}$.
In order to attain additive accuracy
$\eta$,
it therefore suffices to iterate the method
$n \geq 1 + \frac{1}{2} \frac{ \log \left( c \eta^{-1} \right)}{\log (R)} = \bigO \left( \frac{\log \left( \eta^{-1} \right)}{\log (R)} \right)$ times.
A single
iteration
involves multiplying a vector by a matrix,
incurring a cost of
$\bigO \left( \abs{\calX}^2 \right)$,
and so the full complexity of the
Lanczos algorithm
is
$\bigO\left( m + \abs{\calX}^2 \frac{\log \left( \eta^{-1} \right)}{\log (R)} \right)$.
More refined complexity analyses may be found in
\citet{kuczynski1992estimating, arora2005fast}. For comparison, the previous approach of \citet{pmlr-v99-wolfer19a} involved computing $K$ multiplicative reversiblizations, which each requires $\bigO(\abs{\calX}^\omega)$, where $2 \leq \omega \leq 2.3728596$ 
is the best current time complexity of multiplying (and inverting, and diagonalizing) $\abs{\calX} \times \abs{\calX}$ matrices \citep{alman2021refined}.
Our proposed computational method is therefore faster over a non-trivial regime.

\section{Proofs}
\label{section:proofs}
\subsection{Auxiliary lemmas}

\subsubsection{Proof of Lemma~\ref{lemma:technical-correspondence}}
We write
\begin{equation*}
    P - \Pi = \Dpi^{-1/2}\left(L - \sqrt{\pi} \trn \sqrt{\pi}\right) \Dpi^{1/2},
\end{equation*}
and observe that $\sqrt{\pi}$ is an eigenvector of $L$ for eigenvalue $1$.
We verify that $L$ is symmetric, thus we can decompose it as
\begin{equation*}
    L = \sum_{x \in \calX} \lambda_x f_x \trn f_x,
\end{equation*}
with $f_x \in \R^{\calX}$ for any $x \in \calX$, and
where by matrix similarity, $\set{\lambda_x}_{x \in \calX} = \sigma(L) = \sigma(P)$ is real.
Since $P$ is ergodic, there is a unique unit eigenvalue $\lambda_1 = 1$.
Removing the subspace $f_1 \trn f_1 = \sqrt{\pi} \trn \sqrt{\pi}$, it follows that 
$$\rho(P - \Pi) = \rho\left( L - \sqrt{\pi} \trn \sqrt{\pi} \right) = \max \set{\lambda_x \in \sigma(L) \colon \abs{\lambda_x} \neq 1} = 1 - \gamma_\star(P).$$
It follows from symmetry that
\begin{equation*}
    \begin{split}
        \rho\left( L - \sqrt{\pi} \trn \sqrt{\pi} \right) &= \nrm{ L - \sqrt{\pi} \trn \sqrt{\pi} } = \max_{f \neq 0} \frac{\nrm{\left( L - \sqrt{\pi} \trn \sqrt{\pi}\right)f}}{\nrm{f}} \\
        &= \max_{f \neq 0} \frac{\nrm{( P - \Pi)(f/\sqrt{\pi})}_\pi}{\nrm{(f/\sqrt{\pi})}_\pi}, \\
    \end{split}
\end{equation*}
where the last equality is obtained by verifying that for $x \in \calX$,
\begin{equation*}
\begin{split}
    \frac{1}{\sqrt{\pi(x)}}\left(L - \sqrt{\pi} \trn \sqrt{\pi}\right)f(x) &= \frac{1}{\sqrt{\pi(x)}}\sum_{x' \in \calX}\left(L(x,x') - \sqrt{\pi(x)} \sqrt{\pi(x')}\right)f(x') \\
    &= \sum_{x' \in \calX}\left(P(x,x') - \Pi(x,x') \right) \frac{f(x')}{\sqrt{\pi(x')}} = (P - \Pi)\left(f/\sqrt{\pi}\right)(x).
\end{split}
\end{equation*}
By irreducibility of $P$, the mapping $f \rightarrow f / \sqrt{\pi}$ is one-to-one and $f / \sqrt{\pi} = 0$ iff $f = 0$, thus,
\begin{equation*}
    \begin{split}
        \rho\left( L - \sqrt{\pi} \trn \sqrt{\pi} \right) &= \max_{f \neq 0} \frac{\nrm{( P - \Pi)f}_\pi}{\nrm{f}_\pi} = \nrm{P - \Pi}_\pi. \\
    \end{split}
\end{equation*}
\qedsymbol

\subsubsection{Statement and proof of Lemma~\ref{lemma:taylor-type-inequality}}

\begin{lemma}
\label{lemma:taylor-type-inequality}
For any $x \in [0, 1]$ and $p \in \N$,
\begin{equation*}
    1 - (1 - x)^p \geq px (1 - p x/2).
\end{equation*}
\end{lemma}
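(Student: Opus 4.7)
The plan is to reduce this to Bernoulli's inequality via the geometric-series factorization
\[
1 - (1-x)^p = x \sum_{k=0}^{p-1} (1-x)^k.
\]
This identity is elementary and holds for all $x$ and $p \in \mathbb{N}$. The appeal of this route is that it separates the combinatorics (the summation index $k$ going from $0$ to $p-1$) from the nonlinearity (each individual power $(1-x)^k$), both of which can be handled independently.

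Next I would apply Bernoulli's inequality $(1-x)^k \geq 1 - kx$, valid for $x \in [0,1]$ and $k \in \mathbb{N}$, to each summand. Summing over $k = 0, 1, \dots, p-1$ yields
\[
\sum_{k=0}^{p-1}(1-x)^k \;\geq\; \sum_{k=0}^{p-1}(1 - k x) \;=\; p - \frac{p(p-1)}{2}\, x.
\]
Multiplying through by $x \geq 0$ gives
\[
1 - (1-x)^p \;\geq\; px - \frac{p(p-1)}{2}\, x^2.
\]
Finally, since $p(p-1) \leq p^2$, the right-hand side dominates $px - \frac{p^2}{2}x^2 = px(1 - px/2)$, which closes the argument.

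There is essentially no obstacle here; the only mild subtlety is making sure the bound $(1-x)^k \geq 1 - kx$ is valid in the regime $x \in [0,1]$ (so that $1-x \geq 0$ and Bernoulli applies for arbitrary integer $k \geq 0$, with the $k=0$ term being the trivial equality $1 \geq 1$). An alternative, equally short plan would be induction on $p$: the base $p=1$ gives $x \geq x - x^2/2$; for the step, write $1 - (1-x)^{p+1} = [1 - (1-x)^p] + x(1-x)^p$, apply the inductive hypothesis to the first bracket and Bernoulli $(1-x)^p \geq 1 - px$ to the second, then simplify and use $p(p-1) \leq p^2$ once more. Either route keeps the proof to a few lines.
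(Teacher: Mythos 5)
Your proof is correct, and it takes a genuinely different route from the paper's. The paper sets $f_p(x) = 1 - px + p^2x^2/2 - (1-x)^p$ and argues by calculus: it shows $f_p'' > 0$ on $[0,1)$ (treating $p=1$ and $p\ge 2$ separately), deduces from $f_p'(0)=0$ that $f_p$ is non-decreasing, and concludes from $f_p(0)=0$. Your argument is purely algebraic: the factorization $1-(1-x)^p = x\sum_{k=0}^{p-1}(1-x)^k$ combined with Bernoulli's inequality $(1-x)^k \ge 1-kx$ (valid here since $1-x\ge 0$) gives
\begin{equation*}
1-(1-x)^p \;\ge\; px - \tfrac{p(p-1)}{2}x^2 \;\ge\; px\left(1-\tfrac{px}{2}\right),
\end{equation*}
with the last step using $p(p-1)\le p^2$ and $x^2\ge 0$. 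Every step checks out: summing the Bernoulli lower bounds is legitimate term by term, and multiplying by $x\ge 0$ preserves the inequality. What your route buys is the avoidance of any derivative computation and case split on $p$, and in fact a slightly sharper intermediate constant ($\binom{p}{2}$ in place of $p^2/2$); what it costs is that it is tied to integer $p$ through the geometric sum, whereas the paper's monotonicity argument would extend to real exponents $p\ge 2$ with no change. For the lemma as stated ($p\in\N$), either proof is complete.
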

\begin{proof}
The case $x = 1$ is immediate, and we treat the remaining scenario where $x \neq 1$.
    Write $f_p(x) = 1 - px + p^2 x^2/2 - (1 - x)^p$.
    For a fixed $p \in \N$, we compute the first and second order derivatives
    \begin{equation*}
        \begin{split}
            f'_p(x) &= p \left( (1 - x)^{p-1} + px - 1 \right), \\
            f''_p(x) &= p^2 \left( 1 - \left( \frac{p-1}{p}\right)(1 - x)^{p - 2} \right).
        \end{split}
    \end{equation*}
    For $p = 1$, $f''_p(x) = 1 > 0$, and for $p \geq 2$, we have $(p-1)/p < 1$ and $(1 - x)^{p - 2} \leq 1$, thus $f''_p(x) > 0$.
    It follows that $f'_p$ is strictly increasing on $[0,1)$ and since $f'_p(0) = 0$, we also have that $f_p$ is non-decreasing on $[0,1]$. Finally, $f_p(0) = 0$ finishes proving the inequality.
\end{proof}

\subsection{Proof of Lemma~\ref{lemma:learn-laplacian-spectral-norm}}
\label{section:proofs-laplacian}
The proof follows a similar strategy to that of \citet{hsu2019}. We highlight the differences when they occur and streamline parts of the argument. We first assume that the chain is started stationarily, and will accommodate for the non-stationary case at the end of the proof.

\subsubsection{Control the probability of an ill-defined estimator}

From an application of the Bernstein inequality of \citet[Theorem~3.11]{paulin2015concentration}, it is sufficient to have
$m \geq \frac{c}{\pi_\star \gamma \ps} \log \frac{1}{\pi_\star \delta}$, with $c > 0$ a natural constant, in order to ensure that for any $x \in \calX$, $N_x > 0$ with probability at least $1 - \delta/4$.

\subsubsection{Reduction to decoupled blocks}
On the event where our estimators are well-defined, the following decomposition holds,
\begin{equation*}
\begin{split}
&\widehat{L} - L = \EE{Q} + \EE{\pi,1} \widehat{L}  + \widehat{L} \EE{\pi,2} - \EE{\pi,1} \widehat{L} \EE{\pi,2},
\end{split}
\end{equation*}
where
\begin{equation*}
\begin{split}
\EE{Q} \doteq \Dpi^{-1/2} \left( \widehat{Q} - Q \right) \Dpi^{-1/2}, \qquad \EE{\pi,1} \eqdef I - \Dpi^{1/2} \widehat{D}_{\pi}^{-1/2}, \qquad \EE{\pi,2} \eqdef I - \widehat{D}_{\pi}^{1/2}\Dpi^{-1/2}, \\
\end{split}
\end{equation*}
and which can be instructively compared with the decomposition in \citet[Lemma~6.2]{hsu2019}.
From the Perron-Frobenius theorem, $\| \widehat{L} \| = 1$, as $\sqrt{\widehat{\pi}}$ is an eigenvector associated to
eigenvalue $1$ for $\widehat{L} \trn \widehat{L}$. This allows us to upper bound the spectral norm,
\begin{equation}
\label{eq:control-spectral-norm}
\begin{split}
\nrm{\widehat{L} - L} &\leq \nrm{\EE{Q}} + \nrm{\EE{\pi}} \left( 2  + \nrm{\EE{\pi}} \right), 
\end{split}
\end{equation}
where
we use the shorthand
$\nrm{\EE{\pi}} \eqdef \max \set{\nrm{\EE{\pi,1}}, \nrm{\EE{\pi,2}}}$.
From Lemma~\ref{lemma:estimation-pimin-relative}, and the technique described in \citet[Section~6.3]{hsu2019},
for $m \geq  \frac{c}{\pimin \gamma \ps \eps^2} \log \frac{1}{\pimin \delta}$, with probability at least $1 - \delta/4$,
\begin{equation}
\label{eq:control-diagonal-stationary-terms}
\nrm{\EE{\pi}} \leq \eps/6
\end{equation}
for some universal $c > 0$.
It remains to control the spectral norm of $\EE{Q}$. 
From the simple observation that $\E[\pi]{\EE{Q}} = 0$, 
we are left with the task of bounding the fluctuations of the matrix $\EE{Q}$ 
around its mean in spectral norm, when the chain is started from its stationary distribution. 
Suppose for simplicity of the analysis that $m = 2 B s + 1$ for 
\begin{equation}
\label{eq:block-size}
    s = \ceil*{ \frac{4}{\gamma \ps} \log \frac{m \sqrt{e} }{2 \sqrt{\pimin} \delta}},
\end{equation}
and some $B \in \N$ (a simple argument extends the proof beyond this case). 
We partition the trajectory $X_1, \dots, X_m$ into $2B$ blocks of size $s$ , 
$$X^{[1]}, X^{[2]},\dots,X^{[2B]}.$$
Denote for $b \in [B]$,
\begin{equation*}
\begin{split}
\widehat{Q}^{[2b - 1]} &\eqdef \frac{1}{m-1} \sum_{t = (2b - 2)s + 1}^{(2b - 1)s}  {e_{X_t}} \trn e_{X_{t+1}}, \\
\widehat{Q}^{[2b]} &\eqdef \frac{1}{m-1} \sum_{t = (2b - 1)s + 1}^{2b s} {e_{X_t}} \trn e_{X_{t+1}}, \\
\end{split}
\end{equation*}
and we further decompose $\EE{Q}$ as,
\begin{equation}
\label{eq:decomposition-into-two-series-of-blocks}
\begin{split}
\EE{Q} &= \sum_{b = 1}^{B} \EE{Q}^{[2b]} + \sum_{b = 1}^{B} \EE{Q}^{[2b - 1]}\\
\end{split}
\end{equation}
with 
$$\EE{Q}^{[2b]} \doteq \Dpi^{-1/2} \left( \widehat{Q}^{[2b]} - \frac{1}{2B}Q \right) \Dpi^{-1/2},$$ 
and  $\EE{Q}^{[2b - 1]}$ is similarly defined. 
From a union bound argument, sub-additivity of the spectral norm and symmetry of expressions we shall focus on the deviation for the first summand in \eqref{eq:decomposition-into-two-series-of-blocks}. 
From \citet[Corollary~2.7]{yu1994rates},
\begin{equation*}
\begin{split}
\PR[\pi]{\nrm{\sum_{b = 1}^{B} \EE{Q}^{[2b]}} > \eps/4} \leq \PR[\pi]{\nrm{\sum_{b = 1}^{B} \tilde{\mathcal{E}}_{Q}^{[2b]}} > \eps/4} + (B - 1)\beta(s) \\
\end{split}
\end{equation*}
where each $\tilde{\mathcal{E}}_{Q}^{[2b]}$ is computed assuming the chain has 
restarted from its stationary distributions, and where
$\beta(s)$ is the beta coefficient \citep{bradley2005basic} of the process.
For a homogeneous Markov chain, the $\beta$-coefficients can be expressed \citep{mcdonald2011estimating} as
$$\beta(s) = \sum_{x \in \calX} \pi(x) \tv{e_x P^{s} - \pi},$$
and from \citet[(3.10)]{paulin2015concentration}, for any $x \in \calX$,
\begin{equation*}
\begin{split}
    \tv{e_x P^s - \pi}  \leq \frac{1}{2}(1 - \gamma \ps)^{(s - 1/\gamma \ps)/2}\sqrt{\nrm{e_x / \pi}_\pi - 1},
\end{split}
\end{equation*}
hence by definition of $B$, and since $\nrm{e_x/\pi}_\pi \leq 1/\pimin$, we obtain
\begin{equation*}
    (B - 1)\beta(s) \leq \frac{m \sqrt{e} }{4s \sqrt{\pimin}} e^{- s \gamma \ps / 2},
\end{equation*}
For our choice of $s$ in \eqref{eq:block-size}, we have in particular $s > 2/\gamma \ps$, and since $e^{-x}/x \leq e^{-x/2}$ for $x \in (1, \infty)$ and $\gamma \ps \leq 1$,
\begin{equation*}
\begin{split}
    (B - 1)\beta(s) \leq \frac{m \sqrt{e} }{8 \sqrt{\pimin}}  e^{- s \gamma \ps / 4},
\end{split}
\end{equation*}
which is smaller than $\delta/4$ by plugging-in \eqref{eq:block-size}.

\subsubsection{Application of a matrix Bernstein inequality}

Following \citet{hsu2019}, we will rely on the following concentration inequality.

\begin{theorem}[Matrix Bernstein inequality {\citep[Theorem~6.1.1]{tropp2015introduction}}]
\label{theorem:matrix-bernstein}
Let $Z_1, \dots, Z_n$ be a sequence of independent random $\abs{\calX} \times \abs{\calX}$ matrices.
Suppose that for any $t \in [n]$, $\E{Z_t} = 0$, and $\|Z_t\| \leq R$.
Let $S = \sum_{t = 1}^{n} Z_t$ and write
$$\Sigma^2 \eqdef \max \set{ \nrm{\E{\sum_{t=1}^{n} Z_t Z_t \trn}}, \nrm{\E{\sum_{t=1}^{n} Z_t \trn Z_t}} }.$$
Then for any $\eps > 0$,
$$\PR{\nrm{S} \geq \eps} \leq 2\abs{\calX} \exp \left( -\frac{\eps^2 / 2}{\Sigma^2 + R \eps/3} \right).$$
\end{theorem}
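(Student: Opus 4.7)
The plan is to follow the standard matrix Laplace transform method (as in Tropp), adapted for rectangular summands via self-adjoint dilation. First I would reduce to the Hermitian case by setting $H_t = \dil(Z_t)$, which is a $2\abs{\calX} \times 2\abs{\calX}$ Hermitian matrix. Using the block identities $\dil(A)^2 = \diag(A A\trn,\, A\trn A)$ and $\lambda_{\max}(\dil(A)) = \nrm{A}$, one obtains
\begin{equation*}
\nrm{S} = \lambda_{\max}\!\left(\sum_{t=1}^n H_t\right),\qquad \nrm{\sum_{t=1}^n \E{H_t^2}} = \max\!\left\{\nrm{\sum_t \E{Z_t Z_t\trn}},\ \nrm{\sum_t \E{Z_t\trn Z_t}}\right\} = \Sigma^2,
\end{equation*}
together with $\nrm{H_t} = \nrm{Z_t} \leq R$. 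Hence it suffices to prove the bound for Hermitian summands in dimension $2\abs{\calX}$, which is exactly what produces the prefactor $2\abs{\calX}$ in the conclusion.

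Second, I would invoke the matrix Chernoff step. For any $\theta > 0$, by Markov's inequality applied to $\mathrm{tr}\,\exp$,
\begin{equation*}
\PR{\lambda_{\max}\!\left(\sum_t H_t\right) \geq \eps} \leq e^{-\theta \eps}\, \E{\mathrm{tr}\,\exp\!\left(\theta\sum_t H_t\right)}.
\end{equation*}
The pivotal ingredient is subadditivity of matrix cumulant generating functions, a consequence of Lieb's concavity theorem (the map $A \mapsto \mathrm{tr}\,\exp(M + \log A)$ is concave on positive definite $A$): peeling off the summands one at a time and applying Jensen's inequality yields
\begin{equation*}
\E{\mathrm{tr}\,\exp\!\left(\theta\sum_t H_t\right)} \leq \mathrm{tr}\,\exp\!\left(\sum_{t=1}^n \log \E{e^{\theta H_t}}\right).
\end{equation*}

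Third, I would control each matrix cumulant via a scalar Bernstein-type estimate lifted through the functional calculus. The scalar lemma
\begin{equation*}
e^{\theta x} \leq 1 + \theta x + g(\theta)\, x^2,\qquad g(\theta) \eqdef \frac{\theta^2/2}{1 - R\theta/3},\qquad \abs{x} \leq R,\ 0 < \theta < 3/R,
\end{equation*}
follows from the Taylor series $e^{\theta x} - 1 - \theta x = \sum_{k\geq 2}(\theta x)^k/k!$ and the factorial bound $k! \geq 2 \cdot 3^{k-2}$. Because $H_t$ is Hermitian with $\nrm{H_t} \leq R$, this scalar inequality lifts to the operator inequality $e^{\theta H_t} \preceq I + \theta H_t + g(\theta) H_t^2$; taking expectations (using $\E{H_t} = 0$) and then applying $\log(I + A) \preceq A$ gives $\log \E{e^{\theta H_t}} \preceq g(\theta)\, \E{H_t^2}$.

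Finally, monotonicity of the trace exponential and the variance bound $\nrm{\sum_t \E{H_t^2}} \leq \Sigma^2$ yield
\begin{equation*}
\PR{\nrm{S} \geq \eps} \leq 2\abs{\calX}\,\exp\!\left(-\theta \eps + g(\theta)\,\Sigma^2\right),
\end{equation*}
and optimizing at $\theta = \eps/(\Sigma^2 + R\eps/3) \in (0, 3/R)$ produces the advertised Bernstein tail. The main obstacle I expect is step three: the Taylor inequality itself is routine, but promoting it from the spectrum of $H_t$ to a semidefinite ordering, and then combining it with the Lieb--Jensen subadditivity in step two, requires the operator-monotonicity machinery of matrix concentration; by contrast, the dilation reduction and the final optimization in $\theta$ are purely mechanical once that cumulant estimate is in hand.
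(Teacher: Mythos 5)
Your proposal is correct, but note that the paper does not prove this statement at all: it is imported verbatim as Theorem~6.1.1 of \citet{tropp2015introduction} and used as a black box in the proof of Lemma~\ref{lemma:learn-laplacian-spectral-norm}. What you have written is a faithful and accurate reconstruction of Tropp's own argument --- the dilation reduction (which correctly accounts for the $2\abs{\calX}$ prefactor and the identification of $\Sigma^2$), the Lieb--Jensen subadditivity of matrix cumulant generating functions, the Bernstein moment bound $e^{\theta x}\leq 1+\theta x+g(\theta)x^2$ lifted through the transfer rule, and the optimization $\theta=\eps/(\Sigma^2+R\eps/3)$, which indeed yields exactly the exponent $-\tfrac{\eps^2/2}{\Sigma^2+R\eps/3}$ --- so there is nothing to correct.
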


From sub-additivity of the spectral norm,
\begin{equation*}
\begin{split}
  \nrm{\EE{Q}^{[2b]}} &\leq \frac{1}{\pimin} \nrm{Q^{[2b]}} + \frac{1}{2B} \nrm{L} \leq \frac{s}{m - 1}\left( \frac{1}{\pimin} + 1 \right)
  .
\end{split}
\end{equation*}
We now control the variance term for a given $b \in [B]$. A direct computation yields
\begin{equation*}
\begin{split}
  &\E[\pi]{\EE{Q}^{[2b]} \left(\EE{Q}^{[2b]}\right)\trn}  = \E[\pi]{ \Dpi^{-1/2} \widehat{Q}^{[2b]} \Dpi^{-1/2} \left( \Dpi^{-1/2} \widehat{Q}^{[2b]}  \Dpi^{-1/2} \right) \trn } - \frac{1}{(2B)^2} L L \trn
  .
\end{split}
\end{equation*}
Let $u \in \R^{\calX}$ such that $\nrm{u} = 1$, then 
\begin{equation*}
\begin{split}
&(m-1)^2 \nrm{\E[\pi]{ \Dpi^{-1/2} \widehat{Q}^{[2b]} \Dpi^{-1/2} \left( \Dpi^{-1/2} \widehat{Q}^{[2b]}  \Dpi^{-1/2} \right) \trn }} \\
& = (m-1)^2 u \E[\pi]{ \Dpi^{-1/2} \widehat{Q}^{[2b]} \Dpi^{-1/2} \left( \Dpi^{-1/2} \widehat{Q}^{[2b]}  \Dpi^{-1/2} \right) \trn } u \trn \\ 
  & = \sum_{r,s = (2b - 1)s + 1}^{2b s} \E[\pi]{ u  \Dpi^{-1/2} e_{X_r} \trn e_{X_{r+1}}  \Dpi^{-1/2} \Dpi^{-1/2} e_{X_{s+1}} \trn e_{X_{s}}  \Dpi^{-1/2} u \trn}
  .
\end{split}
\end{equation*}
As in \citet{hsu2019}, from Cauchy-Schwarz and AM-GM inequalities,
\begin{equation*}
\begin{split}
&u  \Dpi^{-1/2} e_{X_r} \trn e_{X_{r+1}}  \Dpi^{-1/2} \Dpi^{-1/2} e_{X_{s+1}} \trn e_{X_{s}}  \Dpi^{-1/2} u \trn \\
&\leq \frac{1}{2} u \Dpi^{-1/2}  e_{X_r} \trn e_{X_{r+1}}  \Dpi^{-1/2} \Dpi^{-1/2} e_{X_{r+1}} \trn e_{X_r}  \Dpi^{-1/2} u \trn \\
& + \frac{1}{2} u \Dpi^{-1/2}  e_{X_s} \trn e_{X_{s+1}}  \Dpi^{-1/2} \Dpi^{-1/2} e_{X_{s+1}} \trn e_{X_s}  \Dpi^{-1/2} u \trn. \\
\end{split}
\end{equation*}
We compute,
\begin{equation*}
\begin{split}
&\E[\pi]{   \Dpi^{-1/2} e_{X_t} \trn e_{X_{t+1}}  \Dpi^{-1/2} \Dpi^{-1/2} e_{X_{t+1}} \trn e_{X_{t}}  \Dpi^{-1/2} } \\
&= \E[\pi]{\sum_{x, x, x'' \in \calX} \frac{\pred{X_t = x} \pred{X_{t+1} = x''} \pred{X_{t+1} = x''} \pred{X_t = x'} }{\pi(x'') \sqrt{\pi(x) \pi(x')}} {e_x} \trn e_{x'} } \\
&= \sum_{x \in \calX} \sum_{x' \in \calX} \frac{P(x,x')}{\pi(x')} {e_x} \trn e_x, \\
\end{split}
\end{equation*}
where
\begin{equation*}
\begin{split}
\nrm{\sum_{b = 1}^{B} \E[\pi]{\EE{Q}^{[2b]} \left(\EE{Q}^{[2b]}\right)\trn}} &\leq \frac{s}{2(m-1)} \left(  4 \nrm{ \sum_{x \in \calX} \sum_{x' \in \calX} \frac{P(x,x')}{\pi(x')} {e_x} \trn e_x} + 1 \right). \\
\end{split}
\end{equation*}
By a very similar derivation,
\begin{equation*}
\begin{split}
\nrm{\sum_{b = 1}^{B} \E[\pi]{\left(\EE{Q}^{[2b]}\right)\trn \EE{Q}^{[2b]}}} &\leq \frac{s}{2(m-1)} \left( 4 \nrm{ \sum_{x \in \calX} \sum_{x' \in \calX} \frac{P(x',x)}{\pi(x)} {e_x} \trn e_x} + 1 \right). \\
\end{split}
\end{equation*}
From definition of $\pi$, and in particular regardless of reversibility,
\begin{equation*}
\begin{split}
\sum_{x' \in \calX} \frac{P(x,x')}{\pi(x')} \leq \frac{1}{\pimin}, \qquad \sum_{x' \in \calX} \frac{P(x',x)}{\pi(x)} \leq \frac{1}{\pimin \pi(x)} \sum_{x' \in \calX} \pi(x') P(x',x) = \frac{1}{\pimin}, \\
\end{split}
\end{equation*}
and so,
the spectral norm of this diagonal matrix is at most $\Sigma^2 \leq \frac{s}{2(m-1)} \left( \frac{4}{\pimin} + 1 \right)$.
From Theorem~\ref{theorem:matrix-bernstein}, for $m \geq c  \frac{s}{\pimin \eps^2} \log \frac{\abs{\calX}}{\delta}$,
\begin{equation}
\label{eq:control-Q-end}
\begin{split}
\nrm{\sum_{b = 1}^{B} \tilde{\mathcal{E}}_{Q}^{[2B]}} \leq \eps/4
\end{split}
\end{equation}
with probability $1 - \delta/4$, where $c > 0$ is universal.
Recall that our choice of $s$ depends on $\log m$. For $a, b > 0$, 
$$a = 2b \log b \implies a \geq b \log a,$$
thus solving the inequality for $m$,
it follows that for
\begin{equation*}
    m \geq \frac{c}{\gamma \ps \pimin \eps^2} \log \frac{\abs{\calX}}{\delta} \log \left( \frac{\log \abs{\calX} / \delta}{\gamma \ps \pimin \delta \eps} \right),
\end{equation*}
where $c > 0$ is universal,
a combination of \eqref{eq:control-spectral-norm}, \eqref{eq:control-diagonal-stationary-terms}, 
\eqref{eq:decomposition-into-two-series-of-blocks} and \eqref{eq:control-Q-end} yields the lemma in the stationary setting.

\subsubsection{Reduction to a chain started from its stationary distribution}
From \citet[Proposition~3.15]{paulin2015concentration}, we can quantify the price of a non-stationary start,
\begin{equation*}
\begin{split}
\PR[\mu]{\nrm{\widehat{L} - L} > \eps} \leq \sqrt{\nrm{\mu/\pi}_{\pi}} \PR[\pi]{\nrm{\widehat{L} - L} > \eps} ^{1/2},
\end{split}
\end{equation*}
with $\nrm{\mu/\pi}_{\pi}$ and obtain the final sample complexity from $\nrm{\mu/\pi}_{\pi} \leq 1/\pimin$ and by simplifying logarithms.
\qedsymbol

\subsubsection*{Acknowledgments}
We are thankful to Daniel Paulin for enlightening conversations and to an anonymous referee for pointing out to us the multiplicativity property of $\gamma \ps$.

\subsubsection*{Funding}
GW is supported by the Special Postdoctoral Researcher Program (SPDR) of RIKEN.
AK was partially supported by
the Israel Science Foundation
(grant No. 1602/19), an Amazon Research Award,
and the Ben-Gurion University Data Science Research Center.

\bibliography{bibliography}
\bibliographystyle{abbrvnat}

\end{document}